\def\sbar{\accentset{{\cc@style\underline{\mskip9mu}}}}
\def\mbar{\accentset{{\cc@style\underline{\mskip12mu}}}}
\def\lbar{\accentset{{\cc@style\underline{\mskip18mu}}}}
\newcommand{\keywords}[1]{\textbf{{Keywords: }} #1}
\newcommand{\MSC}[1]{\textbf{{2010 Mathematical Subject Classification: }} #1}
\titleformat*{\section}{\normalfont\large\bfseries}
\theoremstyle{definition}
\newtheorem{Def}{Definition}[section]
\newtheorem{Thm}[Def]{Theorem}
\newtheorem{Prop}[Def]{Proposition}
\newtheorem{Lem}[Def]{Lemma}
\newtheorem{Cor}[Def]{Corollary}
\newtheorem{Rmk}[Def]{Remark}
\newtheorem{Exp}[Def]{Example}
\title{\vspace{-5mm}On the maximality of hyperelliptic Howe curves of genus 3}
\author{Ryo Ohashi}
\begin{document}
\maketitle\vspace{-6mm}
\begin{abstract}
In this paper, we study a Howe curve $C$ in positive characteristic $p \geq 3$ which is of genus 3 and is hyperelliptic. We will show that if $C$ is superspecial, then its standard form is maximal or minimal over $\mathbb{F}_{p^2}\hspace{-0.3mm}$ without taking its $\mathbb{F}_{p^2}$-form.
\end{abstract}

\keywords{Algebraic curve, Superspecial curve, Maximal curve, Positive characteristic}\par
\MSC{14G05, 14G17, 14H45, 14H50}

\section{Introductrion}
Throughout this paper, a curve always means a projective variety in positive characteristic $p \geq 3$ of dimension one. It is well-known that any nonsingular genus-$g$ curve $C$ defined over $\mathbb{F}_q$ with $q = p^n$ satisfies the Hasse-Weil inequality
\[
	1+ q - 2g\sqrt{q} \leq \#C(\mathbb{F}_q) \leq 1 + q  + 2g\sqrt{q},
\]
where $C(\mathbb{F}_q)$ denotes the set of $\mathbb{F}_q$-rational points of $C$. Now, a nonsingular curve $C$ is called \textit{maximal} (resp.\ \textit{minimal}) when the number of $\mathbb{F}_q$-rational points of $C$ attains the upper (resp.\ lower) bound. Maximal curves have been investigated for their applications to coding theory. On the other hand, a genus-$g$ curve $C$ is called \textit{superspecial} when ${\rm Jac}(C)$ is isomorphic to the product of supersingular elliptic curves. This is equivalent to saying that the $a$-number of $C$ is equal to $g$. Here the $a$-number of $C$ is defined to be the dimension of ${\rm Hom}(\alpha_p,{\rm Jac}(C)[p])$, where $\alpha_p$ is the Frobenius kernel on the additive group $\mathbb{G}_a$. It is also known that all maximal or minimal curves over $\mathbb{F}_{p^2}$ are superspecial, while superspecial curves over $\mathbb{F}_{p^2}$ are not necessarily maximal nor minimal. \par

Ibukiyama \cite{Ibukiyama} showed that there exists a superspecial genus-3 curve $C$ over $\mathbb{F}_p$ such that $C$ is maximal over $\mathbb{F}_{p^2}$. However, the curve $C$ is not necessarily hyperelliptic. Kodama-Top-Washio \cite{Kodama} studied the maximality of some specific hyperelliptic curves of genus 3. In this paper, we study the maximalty of hyperelliptic Howe curves of genus 3. A Howe curve is the desingularization of the fiber product over $\mathbb{P}^1$ of two elliptic curves. See Section 2, for the precise definition of Howe curves. Such curves were studied in genus 4 by Howe \cite{Howe} to construct quickly curves having many rational points, and were called Howe curves in Kudo-Harashita-Senda \cite{Kudo}. Before this, Oort constructed a hyperelliptic superspecial curve of genus 3 by using a Howe curve in \cite{Oort}.\par
We see a hyperelliptic Howe curve $C$ of genus 3 is written as $y^2 = (x^4 - ax^2 + 1)(x^4 - bx^2 + 1)$ in Section 3, and we call it the standard form of $C$. Our main result is as below:
\begin{Thm}
Assume that $C: y^2 = (x^4 - ax^2 + 1)(x^4 - bx^2 + 1)$ is nonsingular and superspecial. Then $a$ and $b$ belong to $\mathbb{F}_{p^2}$. Moreover, the followings are true:
\begin{itemize}
\item If $p \equiv 3 \pmod{4}$, then $C$ is maximal over $\mathbb{F}_{p^2}$.\vspace{-1mm}
\item If $p \equiv 1 \pmod{4}$, then $C$ is minimal over $\mathbb{F}_{p^2}$.
\end{itemize}
\end{Thm}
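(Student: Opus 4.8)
The plan is to reduce everything on $C$ to three elliptic curves by exploiting the extra automorphisms of the standard form, verify that each is supersingular, and then pin down the sign of the $\mathbb{F}_{p^2}$-Frobenius. First I would observe that, besides the hyperelliptic involution $h\colon(x,y)\mapsto(x,-y)$, the evenness and palindromy of $f(x)=(x^4-ax^2+1)(x^4-bx^2+1)$ supply two more involutions, $\sigma\colon(x,y)\mapsto(-x,y)$ and $\tau\colon(x,y)\mapsto(1/x,\,y/x^4)$. The group $\langle\sigma,h\rangle\cong(\mathbb{Z}/2)^2$ presents $C$ as a $(\mathbb{Z}/2)^2$-cover of $\mathbb{P}^1$ whose three intermediate quotients are $C/\langle h\rangle=\mathbb{P}^1$, an elliptic curve $E=C/\langle\sigma\rangle$ (in the coordinate $t=x^2$ it is $y^2=(t^2-at+1)(t^2-bt+1)$), and a genus-$2$ curve $D=C/\langle\sigma h\rangle$ (namely $w^2=t(t^2-at+1)(t^2-bt+1)$ with $w=xy$). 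By the standard isogeny decomposition for $(\mathbb{Z}/2)^2$-covers, $\mathrm{Jac}(C)\sim E\times\mathrm{Jac}(D)$; since $\tau$ descends to an involution of $D$ with two elliptic quotients $E_\pm$, one gets $\mathrm{Jac}(D)\sim E_+\times E_-$, and hence $\mathrm{Jac}(C)\sim E\times E_+\times E_-$. In the invariant coordinate $u=x^2+x^{-2}$ these read
\[ E\colon\ Y^2=(u-a)(u-b)(u-2)(u+2),\qquad E_\pm\colon\ Y^2=(u-a)(u-b)(u\pm2). \]
Because $C$ is superspecial, $\mathrm{Jac}(C)$ is a product of supersingular elliptic curves, and supersingularity is an isogeny invariant, so $E,E_+,E_-$ are all supersingular.

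To prove $a,b\in\mathbb{F}_{p^2}$ I would use that each of these supersingular factors has $j$-invariant in $\mathbb{F}_{p^2}$. Writing $j(E),j(E_+),j(E_-)$ as explicit rational functions of $a,b$ over $\mathbb{F}_p$ (cross-ratios of branch loci whose remaining points $\pm2,\infty$ lie in $\mathbb{F}_p$) already forces $a,b$ to be algebraic over $\mathbb{F}_p$. To upgrade this to membership in $\mathbb{F}_{p^2}$, I would invoke that a superspecial Jacobian is defined over $\mathbb{F}_{p^2}$ and apply Torelli to get $C\cong C^{(p^2)}$; reading this off the rigid standard form, up to the finite symmetry group generated by $a\leftrightarrow b$ and $(a,b)\mapsto(-a,-b)$ induced by $\sigma,\tau$, yields $(a^{p^2},b^{p^2})=(a,b)$. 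In a more hands-on, and probably more faithful, version I would instead compute the Hasse--Witt matrix of $C$ from the coefficients of $f^{(p-1)/2}$; superspeciality is equivalent to its vanishing, which gives polynomial relations over $\mathbb{F}_p$ whose direct analysis produces $a,b\in\mathbb{F}_{p^2}$.

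For the maximality/minimality, note that with $a,b\in\mathbb{F}_{p^2}$ fixed, $C$ is maximal (resp.\ minimal) over $\mathbb{F}_{p^2}$ precisely when every $p^2$-Frobenius eigenvalue on $\mathrm{Jac}(C)$ equals $-p$ (resp.\ $+p$), i.e.\ when each of $E,E_+,E_-$ is maximal (resp.\ minimal). The organizing fact I would lean on is elementary: a supersingular elliptic curve defined over $\mathbb{F}_p$ has $a_p=0$, hence $p$-Frobenius eigenvalues $\pm\sqrt{-p}$, hence becomes maximal over $\mathbb{F}_{p^2}$. When $p\equiv3\pmod4$ I would show that the branch data $\{a,b,\pm2\}$ is stable under $\mathrm{Frob}_p$, so that $E,E_\pm$ descend to $\mathbb{F}_p$ and are therefore maximal. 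When $p\equiv1\pmod4$ the same curves appear instead as the quadratic twist by a non-square of curves over $\mathbb{F}_p$, and since such a twist reverses the sign of $a_{p^2}$, they become minimal. The hinge between the two cases is the branch point $u=-2$, which corresponds to $x=\pm i$: whether $\sqrt{-1}\in\mathbb{F}_p$ is exactly the dichotomy $p\equiv1$ versus $p\equiv3\pmod4$.

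The hard part will be this last step, the determination of the exact Frobenius sign, since supersingularity fixes the eigenvalues only up to sign and the entire content of the theorem is that the sign is uniform across the whole superspecial family. I expect the decisive input to be the interplay between the superspecial relations tying $a$ and $b$ together and the quadratic character of $-1$; concretely, I anticipate the sign falling out of an explicit Cartier-operator computation in which a central binomial coefficient such as $\binom{(p-1)/2}{(p-1)/4}$ (which is defined exactly when $p\equiv1\pmod4$) carries the $p\bmod4$ dependence, or equivalently from tracking the relation $F^{(p)}=\mp V$ between Frobenius and Verschiebung on the explicit basis of holomorphic differentials of the standard form.
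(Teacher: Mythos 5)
Your setup is essentially sound and parallels the paper's: the involutions $\sigma,\tau$ and the quotients $E=C/\langle\sigma\rangle$, $E_\pm$ (quotients of the genus-2 curve $D=C/\langle\sigma h\rangle$) give an isogeny decomposition of $\mathrm{Jac}(C)$ into three elliptic curves that are $2$-isogenous to the paper's $E_1,E_2,E_3$, and the reduction of superspeciality to supersingularity of these factors is correct. (One imprecision: $C/\langle\sigma\rangle$ in the coordinate $u=x^2+x^{-2}$ is only $2$-isogenous, not isomorphic, to $Y^2=(u-a)(u-b)(u^2-4)$ --- the function field of $C/\langle\sigma\rangle$ has degree $4$ over $K(u)$, not $2$; this is harmless for zeta functions but should be said correctly.) The real problem is that neither substantive claim of the theorem is actually proved. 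For $a,b\in\mathbb{F}_{p^2}$: the Torelli route fails because $C\cong C^{(p^2)}$ only places $(a^{p^2},b^{p^2})$ in the orbit of $(a,b)$ under \emph{all} M\"obius transformations carrying one standard-form branch locus to another, and this set is strictly larger than the group generated by $a\leftrightarrow b$ and $(a,b)\mapsto(-a,-b)$ (for instance, exchanging the roles of the three elliptic quotients produces a different standard form of the same curve); moreover, even the swap $(a^{p^2},b^{p^2})=(b,a)$ alone would only yield $a,b\in\mathbb{F}_{p^4}$. The Hasse--Witt route is not an argument at all: vanishing of the Hasse--Witt matrix defines a zero-dimensional $\mathbb{F}_p$-scheme, but points of such schemes can have coordinates in arbitrarily large finite fields, so no ``direct analysis'' of the equations by themselves can land you in $\mathbb{F}_{p^2}$.

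For the sign, your mechanism is wrong: nothing forces the set $\{a,b\}$ to be stable under $\mathrm{Frob}_p$ when $p\equiv 3\pmod 4$, so the claimed descent of $E,E_\pm$ to $\mathbb{F}_p$ is unjustified (and your organizing fact ``supersingular over $\mathbb{F}_p$ implies $a_p=0$'' also fails at $p=3$, which the theorem covers). You yourself concede this step is ``the hard part'' and only anticipate a Cartier-operator computation that is never carried out. The missing idea, which the paper uses as its decisive input, is the Auer--Top theorem on Legendre curves: if $y^2=x(x-1)(x-\lambda)$ is supersingular, then $\lambda$ is a \emph{fourth power} in $(\mathbb{F}_{p^2})^{\times}$, and the curve is maximal over $\mathbb{F}_{p^2}$ when $p\equiv 3\pmod 4$ and minimal when $p\equiv 1\pmod 4$. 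From this single statement the paper gets both halves: the fourth-power property gives $\sqrt{\lambda_i}\in\mathbb{F}_{p^2}$, and explicit rational formulas for $a,b$ in terms of $\sqrt{\lambda_1},\sqrt{\lambda_2}$ (Proposition 3.4) then give $a,b\in\mathbb{F}_{p^2}$; and the uniform sign transfers from the Legendre curves to $C$ only after checking (Lemma 4.2, via $\alpha_\pm,\beta_\pm\in\mathbb{F}_{p^2}$) that the isomorphisms of the $E_i$ with their Legendre forms are themselves defined over $\mathbb{F}_{p^2}$, a twist-sensitivity issue your outline glosses over. Without this input, or a completed computation playing its role, the proposal establishes only the decomposition, not the theorem.
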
\vspace{-1mm}
\noindent See Corollary 4.4, for the maximality of the curve of the form $\varepsilon y^2 = (x^4 - ax^2 + 1)(x^4 - bx^2 + 1)$.

\newpage
The remainder of this paper is structured as follows. In Section 2, we review the definition of Howe curves. Proposition 2.4 gives the new condition that Howe curves of genus 3 are hyperelliptic. In Secion 3, we look into the structures of a hyperelliptic Howe curve $C$ of genus 3. In particular, we describe explicitly the elliptic curves appearing as quotients of $C$ by involutions. In Section 4, we prove Theorem 1.1. See Example 4.3, for the number of superspecial hyperelliptic Howe curves in small characteristic.\vspace{-1mm}

\subsection*{Acknowledgments}\vspace{-1mm}
This paper was written while the author is a Ph.D.\,student at Yokohama National University, and I would like to express my special thanks to my supervisor Prof.\ Shushi Harashita for his guidance. The author thank Toshiyuki Katsura and Tomoyoshi Ibukiyama for their helpful comments.

\section{Howe curve of genus 3}
In this section, we recall the definition of a Howe curve $C$ of genus 3 and find conditions that $C$ is hyperelliptic. Let $K$ be a perfect field of characteristic $p \geq 3$. Consider two elliptic curve
\begin{align*}
	E_1 &: y^2 = x(x-1)(x-\lambda_1),\\
	E_2 &: y^2 = x(x-\mu)(x-\mu\lambda_2)\\[-6mm]
\end{align*}
where $\lambda_1,\lambda_2$ and $\mu$ are not $0$ nor $1$. This condition is a condition for $E_1$ and $E_2$ to be nonsingular. Let $f_i: E_i \rightarrow \mathbb{P}^1$ be the usual double cover for $i \in \{1,2\}$. Here $f_1$ is ramified over $S_1= \{0,1,\lambda_1,\infty\}$ and $f_2$ is ramified over $S_2= \{0,\mu,\mu\lambda_2,\infty\}$. The fiber product $E_1 \hspace{-0.3mm}\times_{\mathbb{P}^1\!} E_2$ is irreducible if and only if $|S_1 \cap S_2| \leq 3$, and then the desingularization $C$ of the fiber product $E_1 \hspace{-0.3mm}\times_{\mathbb{P}^1\!} E_2$ is called a \textit{Howe curve}, see \cite[Section 4]{Takashima}. It is known \cite[Proposition 6.1]{Katsura} that the genus of a Howe curve $C$ is equal to $5 - |S_1 \cap S_2|$. More precisely, we obtain the followings:
\begin{itemize}
\item If $\mu = \lambda_1$ and $\mu\lambda_2 = 1$, then $C$ is not irreducible since $|S_1 \cap S_2| = 4$. 
\item Else if $\mu\lambda_2 = 1,\lambda_1$ or $\lambda_1\lambda_2$, then $C$ is a genus-2 curve since $|S_1 \cap S_2| = 3$.
\item Otherwise (i.e. $1,\lambda_1,\mu$ and $\mu\lambda_2$ are all different), then $C$ is a genus-3 curve since $|S_1 \cap S_2| = 2$.
\end{itemize}
We are interested in genus-3 curves, so we make the assumption that $1,\lambda_1,\mu$ and $\mu\lambda_2$ are all different from now on. Remark that nonhyperelliptic Howe curves of genus 3 are called Ciani curves (cf.\ \cite{Ciani}). Then, we have the following diagram\vspace{-3mm}
\begin{equation}
	\vcenter{
	\xymatrix{
	& C \ar[ld] \ar[d] \ar[rd] &\\
	E_1 \ar[rd]_{f_1} & E_3 \ar[d] & E_2 \ar[ld]^{f_2}\\
	& \mathbb{P}^1 &
	}}\vspace{-2mm}
\end{equation}
where $E_3$ is defined by
\[
	E_3: y^2 = (x-1)(x-\lambda_1)(x-\mu)(x-\mu\lambda_2).
\]
The diagram (2.1) induces an isogeny ${\rm Jac}(C) \rightarrow E_1 \hspace{-0.3mm}\times\hspace{-0.3mm} E_2 \hspace{-0.3mm}\times\hspace{-0.3mm} E_3$ by \cite[Section 3]{Kani} of degree $2^3$, thus we have ${\rm Jac}(C)[p] \cong (E_1 \hspace{-0.3mm}\times\hspace{-0.3mm} E_2 \hspace{-0.3mm}\times\hspace{-0.3mm} E_3)[p]$ since the degree of the isogeny is coprime to $p$. Hence $C$ is superspecial if and only if $E_i$ are all supersingular. Let us transform $E_3$ into the Legendre form: 
\begin{Lem}
The elliptic curve $E_3: y^2 = (x-1)(x-\lambda_1)(x-\mu)(x-\mu\lambda_2)$ is isomorphic to
\begin{equation}
	y^2 = x(x-1)(x-\lambda_3), \quad \lambda_3 = \frac{(\mu\lambda_2-1)(\mu-\lambda_1)}{(\mu\lambda_2-\lambda_1)(\mu-1)}.
\end{equation}
Note that $\lambda_3 \notin \{0,1,\infty\}$ is just the condition that $\lambda_1,\lambda_2$ and $\mu$ are not $0$ nor $1$.
\begin{proof}
The change of variables
\[
	X = \frac{x-1}{x-\lambda_1} \cdot \frac{\mu-\lambda_1}{\mu-1}, \quad Y = \frac{(\lambda_1-1)(\mu-\lambda_1)}{(\mu-1)^2} \cdot \frac{y}{(x-\lambda_1)^2}
\]
gives an equation
\[
	\frac{\mu\lambda_2-\lambda_1}{\mu-1} \cdot Y^2 = X(X-1)(X-\lambda_3).
\]
This is isomorphic to the elliptic curve in the Legendre form $y^2 = x(x-1)(x-\lambda_3)$ clearly.
\end{proof}
\end{Lem}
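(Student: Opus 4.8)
The plan is to realize $E_3$ as the double cover of the $x$-line branched over the four points $\{1,\lambda_1,\mu,\mu\lambda_2\}$, and to reach the Legendre form by a fractional-linear change of the base coordinate that normalizes three of these branch points to $0,1,\infty$. Once such a normalization is fixed, the Legendre parameter is forced: $\lambda_3$ must be the image of the remaining branch point, i.e.\ the cross-ratio of the four points. So the conceptual content of the lemma is simply the computation of this cross-ratio, and the verification that the degree-$4$ model transforms into a degree-$3$ one.

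Concretely, I would introduce $X = \frac{x-1}{x-\lambda_1}\cdot\frac{\mu-\lambda_1}{\mu-1}$, an automorphism of $\mathbb{P}^1$ sending $1\mapsto 0$, $\lambda_1\mapsto\infty$, and $\mu\mapsto 1$. A direct substitution shows that the fourth branch point is sent to $\mu\lambda_2\mapsto \frac{(\mu\lambda_2-1)(\mu-\lambda_1)}{(\mu\lambda_2-\lambda_1)(\mu-1)}=\lambda_3$, exactly the claimed value. Inverting, $x$ becomes a linear-fractional function of $X$ with common denominator $X-k$, where $k=\frac{\mu-\lambda_1}{\mu-1}$.

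Next I would substitute this into the quartic. Writing each factor $x-a$ over the common denominator $X-k$, one finds that $x-1$, $x-\mu$, $x-\mu\lambda_2$ contribute $X$, $X-1$, $X-\lambda_3$ respectively (each up to a nonzero constant), while $x-\lambda_1 = \frac{k(\lambda_1-1)}{X-k}$ contributes only a constant over $X-k$, reflecting that its root was sent to infinity. Collecting denominators, the quartic becomes $\frac{(\mathrm{const})}{(X-k)^4}\,X(X-1)(X-\lambda_3)$, so the order-$4$ pole at $X=k$ must be cancelled on the $y$-side. This is precisely the role of $Y=\frac{(\lambda_1-1)(\mu-\lambda_1)}{(\mu-1)^2}\cdot\frac{y}{(x-\lambda_1)^2}$: the factor $(x-\lambda_1)^{-2}$ supplies $(X-k)^2$, and after squaring clears the $(X-k)^4$ in the denominator. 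Tracking the constants then gives $c\,Y^2=X(X-1)(X-\lambda_3)$ for an explicit $c\in K^\times$.

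Finally, absorbing $c$ into $Y$ (a scaling that alters the model by at most a quadratic twist and hence preserves its geometric isomorphism class, which is all the later supersingularity computations require) identifies $E_3$ with $y^2=x(x-1)(x-\lambda_3)$. For the parenthetical claim, a short computation shows that $\lambda_3=0$ forces $\mu\lambda_2=1$ or $\mu=\lambda_1$, that $\lambda_3=\infty$ forces $\mu\lambda_2=\lambda_1$ or $\mu=1$, and that $\lambda_3=1$ forces $\mu=0$, $\lambda_1=1$, or $\lambda_2=1$; under the standing hypothesis that $1,\lambda_1,\mu,\mu\lambda_2$ are distinct together with $\lambda_1,\lambda_2,\mu\notin\{0,1\}$, each of these is excluded. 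I expect the only delicate point to be the constant bookkeeping in the substitution for $Y$ — in particular confirming that $(x-\lambda_1)^{-2}$, and no other power, is what turns the quartic into a cubic, which amounts to the fact that $x=\lambda_1$ is exactly the branch point pushed to infinity.
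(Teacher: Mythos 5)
Your proposal is correct and follows essentially the same route as the paper: the paper's proof uses exactly the change of variables $X = \frac{x-1}{x-\lambda_1}\cdot\frac{\mu-\lambda_1}{\mu-1}$, $Y = \frac{(\lambda_1-1)(\mu-\lambda_1)}{(\mu-1)^2}\cdot\frac{y}{(x-\lambda_1)^2}$ you describe, arrives at $c\,Y^2 = X(X-1)(X-\lambda_3)$ for the same constant $c = \frac{\mu\lambda_2-\lambda_1}{\mu-1}$, and absorbs the twist. Your cross-ratio interpretation of $\lambda_3$, the pole-cancellation bookkeeping, and the verification of the parenthetical claim are just more detailed expositions of what the paper leaves implicit.
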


We have seen how the Legendre forms $y^2 = x(x-1)(x-\lambda_i)$ of $E_1,E_2$ and $E_3$ are obtained from the Howe curve associated to $\lambda_1, \lambda_2$ and $\mu$. Conversely we can find $\mu$ from $\lambda_1, \lambda_2$ and $\lambda_3$. Recall the definition of $\lambda_3$ in (2.2), then we obtain
\[
	(\mu\lambda_2-1)(\mu-\lambda_1) = \lambda_3(\mu\lambda_2-\lambda_1)(\mu-1)
\]
and this equation can be rearranged as
\begin{equation}
	\mu^2\lambda_2(1-\lambda_3) - \mu(\lambda_1\lambda_2-\lambda_2\lambda_3-\lambda_3\lambda_1+1) + \lambda_1(1-\lambda_3) = 0.
\end{equation}
Solving this quadratic equation for $\mu$, we get
\[
	\mu = \frac{(\lambda_1\lambda_2-\lambda_2\lambda_3-\lambda_3\lambda_1+1) \pm \hspace{-0.3mm}\sqrt{D}}{2\lambda_2(1-\lambda_3)}
\]
with $D = (\lambda_1\lambda_2-\lambda_2\lambda_3-\lambda_3\lambda_1+1)^2 - 4\lambda_1\lambda_2(1-\lambda_3)^2$.

\begin{Prop}
A Howe curve $C$ is hyperelliptic if and only if $\mu^2\lambda_2 = \lambda_1$.
\begin{proof}
See \cite[Section 5.11]{Oort}.
\end{proof}
\end{Prop}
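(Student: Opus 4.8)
The plan is to detect the hyperelliptic involution through its action on the base of the degree-$4$ map $x\colon C\to\mathbb{P}^1$. By construction $K(C)=K(x,u,v)$ with $u^2=x(x-1)(x-\lambda_1)$ and $v^2=x(x-\mu)(x-\mu\lambda_2)$, so this map is Galois with group $G=(\mathbb{Z}/2\mathbb{Z})^2=\{1,s,t,st\}$, where $s\colon u\mapsto-u$, $t\colon v\mapsto-v$, and the three quotients are the genus-one curves $E_2,E_1,E_3$. The cover is branched exactly over $B=\{0,1,\lambda_1,\mu,\mu\lambda_2,\infty\}$, with $S_1=\{0,1,\lambda_1,\infty\}$ and $S_2=\{0,\mu,\mu\lambda_2,\infty\}$ the branch loci of $f_1,f_2$ and $S_1\cap S_2=\{0,\infty\}$. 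The one fact I would record at the outset is that the fibre of $x$ over a point of $B$ has fewer than four points (in fact exactly two), while over any other point it has four.

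For ($\Rightarrow$), suppose $C$ is hyperelliptic and let $\iota$ be its hyperelliptic involution. Since for $g\ge 2$ the hyperelliptic involution is unique and hence central in $\mathrm{Aut}(C)$, it commutes with every element of $G$, and $\iota\notin G$ because $C/\iota\cong\mathbb{P}^1$ while the quotients by the nontrivial elements of $G$ have genus one. Consequently $\iota$ descends to a \emph{nontrivial} involution $\bar\iota$ of $\mathbb{P}^1$ (nontrivial as $\iota\notin G=\mathrm{Gal}(K(C)/K(x))$), and because $\iota$ centralizes each order-two subgroup it preserves each branch locus $S_i$ individually. Now Riemann--Hurwitz is the key numerical input: a genus-$0$ quotient forces $\iota$ to have $2g+2=8$ fixed points, all lying over the two fixed points of $\bar\iota$. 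As the fibre over a branch point has fewer than four points, both fixed points of $\bar\iota$ must lie outside $B$; thus $\bar\iota$ fixes no point of $B$. Preserving $\{0,\infty\}$ without fixing either forces $\bar\iota\colon x\mapsto c/x$, and preserving $\{1,\lambda_1\}$ and $\{\mu,\mu\lambda_2\}$ without fixed points forces $\bar\iota$ to swap each pair. Reading off $\bar\iota(1)=c=\lambda_1$ and $\bar\iota(\mu)=c/\mu=\mu\lambda_2$ yields $\lambda_1=c=\mu^2\lambda_2$.

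For ($\Leftarrow$), assume $\mu^2\lambda_2=\lambda_1=:c$ and define $\sigma$ on $K(x,u,v)$ by $x\mapsto c/x$, $u\mapsto cu/x^2$, $v\mapsto cv/x^2$. A direct substitution shows $(cu/x^2)^2=(c^2/x^4)\,u^2$ agrees with $\tfrac{c}{x}(\tfrac{c}{x}-1)(\tfrac{c}{x}-\lambda_1)$ precisely because $c=\lambda_1$, and similarly for $v$ because $c=\mu^2\lambda_2$; hence $\sigma$ preserves both defining relations and is a well-defined automorphism of $C$ with $\sigma^2=\mathrm{id}$. Its fixed points lie over $x=\pm\sqrt c$, and the hypotheses $\lambda_i,\mu\notin\{0,1\}$ force $\pm\sqrt c\notin B$, so each of these two fibres has four points; since $x^2=c$ makes $\sigma$ act trivially on $(u,v)$ there, all eight points are fixed, and Riemann--Hurwitz gives $g(C/\sigma)=0$, so $C$ is hyperelliptic.

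I expect the main obstacle to be the two structural claims underlying the fixed-point count rather than any computation: first, that the \emph{central} $\iota$ descends to a $\bar\iota$ preserving each $S_i$ (this needs the compatibility of $\iota$ with all three intermediate quotients $C\to E_i\to\mathbb{P}^1$); and second, the exact fibre behaviour over the common branch points $0,\infty$. For the latter, near $x=0$ one has $u^2,v^2\in x\cdot K[[x]]^\times$, so $u/v$ is a unit with two square-root branches in characteristic $p\ge 3$, and the normalization carries two points above $x=0$ (not one). This is exactly what secures the clean dichotomy ``fewer than four points over $B$, four elsewhere'' that drives the argument; everything else is the elementary identification of $x\mapsto c/x$ as the unique involution of $\mathbb{P}^1$ swapping $0$ and $\infty$, together with the substitution check for $\sigma$.
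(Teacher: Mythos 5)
Your proposal is correct, but it takes a genuinely different route from the paper, which offers no argument of its own for Proposition 2.3 and simply defers to \cite[Section 5.11]{Oort}. You give a self-contained geometric proof: for the forward direction you use the uniqueness (hence centrality) of the hyperelliptic involution $\iota$, so that it descends to a nontrivial involution $\bar\iota$ of $\mathbb{P}^1=C/G$ preserving each branch locus $S_i$; comparing the $2g+2=8$ fixed points of $\iota$ with the two fibres over the fixed points of $\bar\iota$ (at most four points each, and only two over points of $B$) forces $\bar\iota$ to act freely on $B$, hence $\bar\iota(x)=c/x$ must swap $1\leftrightarrow\lambda_1$ and $\mu\leftrightarrow\mu\lambda_2$, giving $c=\lambda_1=\mu^2\lambda_2$. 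For the converse you exhibit the involution $\sigma$ explicitly and apply Riemann--Hurwitz. I checked the computations: $\sigma$ preserves both quadratic relations exactly when $c=\lambda_1$ and $c=\mu^2\lambda_2$; the fixed fibres lie over $x=\pm\sqrt{c}\notin B$ precisely because $\lambda_1,\lambda_2\notin\{0,1\}$; and the normalization does carry two points over each of $0,\infty$ since $u/v$ is a unit there, so the fibre-count dichotomy holds. What your route buys is transparency: it shows that the condition $\mu^2\lambda_2=\lambda_1$ is exactly the existence of the extra involution $x\mapsto\lambda_1/x$ on the base, which is also what underlies the standard form $y^2=(x^4-ax^2+1)(x^4-bx^2+1)$ and its involutions $\sigma_1,\sigma_2$ in Section 3 of the paper; what the citation buys is brevity and the authority of Oort's treatment. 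The one step you should write out in full (you flag it yourself) is why $\bar\iota$ preserves $S_1$ and $S_2$ \emph{separately}: since $\iota$ commutes with each nontrivial element of $G$, it induces an involution on each quotient $E_i$ covering $\bar\iota$, and this involution carries ramification points of $f_i$ to ramification points, whence $\bar\iota(S_i)=S_i$; this is routine but is genuinely needed, since preservation of $B=S_1\cup S_2$ alone would not pin down the two swaps that yield $\lambda_1=\mu^2\lambda_2$.
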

Here is a paraphrase of the criterion in terms of the discriminant $D$. 
\begin{Prop}
A Howe curve $C$ is hyperelliptic if and only if $D=0$.
\begin{proof}
Suppose that $D=0$, then it follows from $(\lambda_1\lambda_2-\lambda_2\lambda_3-\lambda_3\lambda_1+1)^2 = 4\lambda_1\lambda_2(1-\lambda_3)^2$ that
\[
	\mu^2\lambda_2 = \frac{(\lambda_1\lambda_2-\lambda_2\lambda_3-\lambda_3\lambda_1+1)^2}{4{\lambda_2}(1-\lambda_3)^2} = \frac{4\lambda_1\lambda_2(1-\lambda_3)^2}{4{\lambda_2}(1-\lambda_3)^2} = \lambda_1.
\]
Therefore $C$ is hyperelliptic by Proposition 2.3. On the other hand, suppose that $C$ is hyperelliptic. By substituting $\mu^2\lambda_2 = \lambda_1$ for (2.3), we have
\[
	\mu(\lambda_1\lambda_2-\lambda_2\lambda_3-\lambda_3\lambda_1+1) = 2\lambda_1(1-\lambda_3).
\]
Square both sides and multiplying both sides by $\hspace{-0.3mm}\lambda_2$, then we have
\[
	\mu^2\lambda_2(\lambda_1\lambda_2-\lambda_2\lambda_3-\lambda_3\lambda_1+1)^2 = 4\lambda_1^2\lambda_2(1-\lambda_3)^2.
\]
Substituting $\mu^2\lambda_2 = \lambda_1$ again yields $ (\lambda_1\lambda_2-\lambda_2\lambda_3-\lambda_3\lambda_1+1)^2 = 4\lambda_1\lambda_2(1-\lambda_3)^2$.
This means $D=0$, thus the proposition is true.
\end{proof}
\end{Prop}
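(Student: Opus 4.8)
The plan is to combine Proposition 2.3 with the quadratic relation (2.3), observing that $D$ is nothing but the discriminant of (2.3) regarded as a quadratic equation in $\mu$. Writing $B := \lambda_1\lambda_2 - \lambda_2\lambda_3 - \lambda_3\lambda_1 + 1$, equation (2.3) reads $\lambda_2(1-\lambda_3)\mu^2 - B\mu + \lambda_1(1-\lambda_3) = 0$, whose discriminant is precisely $D = B^2 - 4\lambda_1\lambda_2(1-\lambda_3)^2$. By Proposition 2.3, being hyperelliptic is equivalent to $\mu^2\lambda_2 = \lambda_1$, so it suffices to prove the purely algebraic equivalence $\mu^2\lambda_2 = \lambda_1 \iff D = 0$, where $\mu$ is a root of (2.3). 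Throughout I would use the standing assumption $\lambda_1,\lambda_2,\lambda_3 \notin \{0,1\}$, which guarantees $\lambda_1 \neq 0$, $\lambda_2 \neq 0$ and $1-\lambda_3 \neq 0$, so that every division below is legitimate.

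For the direction $D = 0 \Rightarrow \mu^2\lambda_2 = \lambda_1$, I would use that a vanishing discriminant forces the unique (double) root $\mu = B/(2\lambda_2(1-\lambda_3))$, the leading coefficient $\lambda_2(1-\lambda_3)$ being nonzero. Substituting this expression and using $B^2 = 4\lambda_1\lambda_2(1-\lambda_3)^2$ gives $\mu^2\lambda_2 = B^2/(4\lambda_2(1-\lambda_3)^2) = \lambda_1$ after cancellation, and hyperellipticity then follows from Proposition 2.3.

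For the converse $\mu^2\lambda_2 = \lambda_1 \Rightarrow D = 0$, the idea is to use the hyperelliptic condition to linearize (2.3). Substituting $\mu^2\lambda_2 = \lambda_1$ into (2.3) collapses the $\mu^2$ and the constant terms into $2\lambda_1(1-\lambda_3)$, leaving the linear relation $\mu B = 2\lambda_1(1-\lambda_3)$. Squaring, multiplying by $\lambda_2$, and substituting $\mu^2\lambda_2 = \lambda_1$ once more removes $\mu$ entirely and yields $\lambda_1 B^2 = 4\lambda_1^2\lambda_2(1-\lambda_3)^2$; dividing by $\lambda_1 \neq 0$ gives $B^2 = 4\lambda_1\lambda_2(1-\lambda_3)^2$, i.e.\ $D = 0$.

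The only genuine subtlety is bookkeeping: I must make sure that squaring in the converse does not introduce a spurious case and that all the cancellations are justified by $\lambda_1,\lambda_2,(1-\lambda_3) \neq 0$. Since both implications are established directly, squaring causes no trouble, as no extraneous root is ever claimed, only the identity $D = 0$ is derived. I would therefore expect the main (and rather modest) obstacle to be organizing the substitutions cleanly rather than any conceptual difficulty, the conceptual content being entirely carried by Proposition 2.3 together with the interpretation of $D$ as the discriminant of (2.3).
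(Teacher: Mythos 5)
Your proposal is correct and follows essentially the same route as the paper: both directions use Proposition 2.3, the forward direction reads off the double root $\mu = B/(2\lambda_2(1-\lambda_3))$ from the vanishing discriminant and checks $\mu^2\lambda_2 = \lambda_1$, and the converse linearizes (2.3) via $\mu^2\lambda_2 = \lambda_1$, squares, multiplies by $\lambda_2$, and substitutes once more. Your explicit bookkeeping of the nonvanishing of $\lambda_1$, $\lambda_2$ and $1-\lambda_3$ is a slightly more careful presentation of exactly the paper's argument.
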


In the hyperelliptic case, only at most two values of $\lambda_3$ are possible for given $\lambda_1$ and $\lambda_2$ by using Proposition 2.3. The equation $D=0$ can be rearranged into
\[
	(\lambda_1-\lambda_2)^2\lambda_3^2 - 2\{\lambda_1(\lambda_2-1)^2+\lambda_2(\lambda_1-1)^2\}\lambda_3 + (\lambda_1\lambda_2-1)^2 = 0,
\]
and we can solve this equation as follows:
\begin{equation}
	\lambda_3 = \left\{
	\begin{array}{l}
		\displaystyle\frac{\{\hspace{-0.3mm}\sqrt{\lambda_1}(\lambda_2-1)\pm\hspace{-0.3mm}\sqrt{\lambda_2}(\lambda_1-1)\}^2}{(\lambda_1-\lambda_2)^2} \quad (\lambda_1 \neq \lambda_2),\\[3mm]
		\displaystyle\frac{(\lambda_1+1)^2}{4\lambda_1} \quad\hspace{35.7mm} (\lambda_1 = \lambda_2).
	\end{array}
	\right.
\end{equation}
This result will be used in Section 4.
\begin{Exp}
Consider the case that $\lambda_1 = \lambda_2$. Put $\lambda := \lambda_1 = \lambda_2$ and $\mu = -1$, that is,\vspace{-1mm}
\begin{align*}
	E_1 &: y^2 = x(x-1)(x-\lambda),\\
	E_2 &: y^2 = x(x+1)(x+\lambda),\\
	E_3 &: y^2 = (x^2-1)(x^2-\lambda^2).\\[-7mm]
\end{align*}
Then, the Howe curve $C$ is hyperelliptic by using Proposition 2.2. Moreover, the elliptic curve $E_3$ is isogenous to $Y^2 = X(X-1)(X-\lambda^2)$, and thus $C$ is superspecial if and only if $H_p(\lambda) = H_p(\lambda^2) = 0$ where we define\vspace{-1mm}
\[
	H_p(t) := \sum_{i = 0}^{(p-1)/2} \binom{(p-1)/2}{i}^{\!\!2}t^i.
\]
\end{Exp}

\section{Hyperelliptic Howe curve of genus 3}
\setcounter{equation}{0}
Let $K$ be a perfect field of characteristic $p \geq 3$. In this section, we consier the desingularization $C$ of a hyperelliptic curve of genus 3
\begin{equation}
	y^2 = x^8 + Mx^6 + Nx^4 + Mx^2 + 1
\end{equation}
at the infinity point with $M,N \in K$. We shall show that the curve $C$ is a hyperelliptic Howe curve of genus 3 if it is nonsingular and that any hyperelliptic Howe curve of genus 3 is realized as a curve of the form (3.1). Firstly, we study the nonsingularity of the curve $C$. Factorizing the right-hand side of (3.1), we get
\begin{equation}
	y^2 = (x^4 - ax^2 + 1)(x^4 - bx^2 + 1)
\end{equation}
where $a+b = -M$ and $ab = N-2$ holds. 
\begin{Lem}
The curve $C$ is nonsingular if and only if $a,b \neq \pm2$ and $a - b \neq 0$.
\begin{proof}
Recall that a hyperelliptic curve $y^2 = f(x)$ is nonsingular except at the infinity point if and only if $f(x)$ does not have any multiple root, so it suffices to show that $(x^4-ax^2+1)(x^4-bx^2+1)$ has a multiple root if and only if $a,b = \pm 2$ or $a-b = 0$.\par
Here, the polynomials $x^4-ax^2+1$ and $x^4-bx^2+1$ has a common root if and only if $a-b = 0$. Moreover, one can confirm that the polynomial $x^4-ax^2+1$ (resp.\ $x^4-bx^2+1$) has a multiple root if and only if $a = \pm2$ (resp.\ $b = \pm2$). The proof is completed.
\end{proof}
\end{Lem}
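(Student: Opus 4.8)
The plan is to reduce the geometric statement to an elementary separability question about the octic $f(x) = (x^4-ax^2+1)(x^4-bx^2+1)$. Since $C$ is by construction the desingularization of $y^2 = f(x)$ at the point at infinity, any singularity of $C$ must occur in the affine chart, where the classical criterion applies: $y^2 = f(x)$ is nonsingular over the affine line exactly when $f$ has no repeated root. So it is enough to determine when $f$ fails to be squarefree, and to match that failure with the condition ``$a = \pm 2$, $b = \pm 2$, or $a = b$''.

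Next I would observe that, $f$ being a product of two quartics, it is non-squarefree in precisely one of two ways: either one of the two factors is itself non-squarefree, or the two factors share a common root. I would handle these separately. For a single factor $g_c(x) = x^4 - cx^2 + 1$, computing $g_c'(x) = 2x(2x^2 - c)$ shows that a repeated root must be a common zero of $g_c$ and $g_c'$; the candidate $x = 0$ is excluded because $g_c(0) = 1$, so any repeated root satisfies $x^2 = c/2$, and substitution gives $g_c = 1 - c^2/4$, which vanishes iff $c = \pm 2$. Hence $x^4 - ax^2+1$ (resp. $x^4-bx^2+1$) is squarefree iff $a \neq \pm 2$ (resp. $b \neq \pm 2$).

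For the common-root condition, subtracting the two factors gives $(b-a)x^2$, so any shared root with $a \neq b$ would force $x = 0$, again impossible; and if $a = b$ the two factors coincide and every root is doubled. Thus the two quartics have a common root iff $a = b$. Assembling the three cases yields the stated equivalence. The argument is entirely elementary, so the only real point of care is the first reduction: one must be sure that building the desingularization at infinity into the definition of $C$ genuinely confines all possible singularities to the affine chart, which is exactly why the squarefree-ness of $f$ alone governs the nonsingularity of $C$.
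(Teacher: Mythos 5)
Your proposal is correct and follows the same route as the paper: both reduce nonsingularity of $C$ to squarefreeness of $(x^4-ax^2+1)(x^4-bx^2+1)$, then split into the common-root case (forcing $a=b$) and the repeated-root-of-a-factor case (forcing $a=\pm2$ or $b=\pm2$). The only difference is that you supply the explicit derivative and resultant-style computations that the paper compresses into ``one can confirm,'' which is a welcome but not substantive addition.
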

\begin{Rmk}
The curve $C$ is nonsingular if and only if
\[
	2M+N+2 \neq 0, \quad -2M+N+2 \neq 0, \quad M^2-4N+8 \neq 0.
\]
Indeed, by using $M = -(a+b)$ and $N = ab+2$, then we can compute $2M+N+2 = (a+2)(b+2)$, $-2M+N+2 = (a-2)(b-2)$ and $M^2-4N+8 = (a-b)^2$ directly.
\end{Rmk}
Next, let $\sigma_1,\sigma_2,\sigma_3$ be automorphisms be a nonsingular curve $C$ as below:
\begin{align*}
	\sigma_1: (x,y) \mapsto (1/x,y/x^4), \quad \sigma_2: (x,y) \mapsto (-1/x,y/x^4), \quad \sigma_3: (x,y) \mapsto (-x,y).
\end{align*}
Put $E_i := C/\langle\sigma_i\rangle$ for $i \in \{1,2,3\}$, then we obtain the equations of genus-1 curves
\begin{align*}
	E_1 &: v^2 = (u^2-a-2)(u^2-b-2), \quad\hspace{4.3mm} u = x+1/x,\ v = y/x^2,\\
	E_2 &: v^2 = (u^2-a+2)(u^2-b+2), \quad\hspace{4.3mm} u = x-1/x,\ v = y/x^2,\\
	E_3 &: v^2 = (u^2-au+1)(u^2-bu+1), \quad u = x^2,\ v = y\\[-6.5mm]
\end{align*}
and we can regard the curve
\[
	P: v^2 = (u-a)(u-b), \quad u = x^2+1/x^2,\ v = y/x^2
\]
as the quotient $C/\langle\sigma_1,\sigma_2\rangle$. The genus of $P$ is $0$, and thus the curve $P$ is isomorphic to the projective line $\mathbb{P}^1$. For the above discussion, we obtain the following diagram:\vspace{-2mm}
\[
	\xymatrix{
	& C \ar[ld] \ar[d] \ar[rd] &\\
	E_1 \ar[rd] & E_3 \ar[d] & E_2 \ar[ld]\\
	& P &
	}\vspace{-2mm}
\]
This means that $C$ is a Howe curve of genus 3 as studied in Section 2. Conversely, any hyperelliptic Howe curve of genus 3 is written as (3.2) by \cite[Section 5]{Bouw}. We choose $\alpha_+,\alpha_-,\beta_+$ and $\beta_-$ such that
\begin{equation}
	(\alpha_+)^2 = a+2, \quad (\alpha_-)^2 = a-2, \quad (\beta_+)^2 = b+2, \quad (\beta_-)^2 = b-2.
\end{equation}
once and fix them throughout this paper.

\begin{Lem}
We can transform $E_i$ into the Legendre forms $y^2 = x(x-1)(x-\lambda_i)$ with
\begin{align}
	\lambda_1 &= \biggl(\frac{\alpha_+\hspace{-0.3mm}+\beta_+}{\alpha_+\hspace{-0.3mm}-\beta_+}\biggr)^{\!\!2},\nonumber\\
	\lambda_2 &= \biggl(\frac{\alpha_-\hspace{-0.3mm}-\beta_-}{\alpha_-\hspace{-0.3mm}+\beta_-}\biggr)^{\!\!2},\\
	\lambda_3 &= \biggl(\frac{\alpha_+\beta_-\hspace{-0.3mm}-\alpha_-\beta_+}{\alpha_+\beta_-\hspace{-0.3mm}+\alpha_-\beta_+}\biggr)^{\!\!2}.\nonumber
\end{align}
\begin{proof}
Factorizing the right hand side of the equation of $E_1$, we have
\[
	E_1: v^2 = (u+\alpha_+)(u-\alpha_+)(u+\beta_+)(u-\beta_+).
\]
\newpage \noindent The map
\[
	u \mapsto \frac{u-\alpha_+}{u+\alpha_+} \cdot \frac{\beta_+\hspace{-0.3mm}+\alpha_+}{\beta_+\hspace{-0.3mm}-\alpha_+}
\]
transforms $E_1$ into the Legendre form:
\[
	v^2 = u(u-1)\biggl(u-\frac{(\beta_+\hspace{-0.3mm}+\alpha_+)^2}{(\beta_+\hspace{-0.3mm}-\alpha_+)^2}\biggr) = u(u-1)(u-\lambda_1).
\]
Therefore, we obtain the equation of $\lambda_1$ in (3.4). The case of $E_2$ can be shown in the same way.\par
Secondly, we factorize the right-hand side of the equation of $E_3$:
\[
	E_3: v^2 = (u-\gamma_1)(u-\gamma_2)(u-\gamma_3)(u-\gamma_4)
\]
where
\[
	\gamma_1 = \frac{a+\alpha_+\alpha_-}{2}, \quad \gamma_2 = \frac{a-\alpha_+\alpha_-}{2}, \quad \gamma_3 = \frac{b + \beta_+\beta_-}{2}, \quad \gamma_4 = \frac{b - \beta_+\beta_-}{2}.
\]
The map
\[
	u \mapsto \frac{u - \gamma_1}{u-\gamma_2} \cdot \frac{\gamma_3-\gamma_2}{\gamma_3-\gamma_1}
\]
transforms $E_3$ into the Legendre form:
\begin{equation}
	v^2 = u(u-1)\biggl(u-\frac{(\gamma_4-\gamma_2)(\gamma_3-\gamma_1)}{(\gamma_4-\gamma_1)(\gamma_3-\gamma_2)}\biggr) = u(u-1)\biggl(u-\frac{\gamma_1\gamma_4+\gamma_2\gamma_3-\gamma_1\gamma_2-\gamma_3\gamma_4}{\gamma_1\gamma_3+\gamma_2\gamma_4-\gamma_1\gamma_2-\gamma_3\gamma_4}\biggr)
\end{equation}
Here, we obtain
\begin{align*}
	ab &= (\gamma_1+\gamma_2)(\gamma_3+\gamma_4) = \gamma_1\gamma_3 + \gamma_1\gamma_4 + \gamma_2\gamma_3 + \gamma_2\gamma_4,\\
	\alpha_+\alpha_-\beta_+\beta_- &= (\gamma_1-\gamma_2)(\gamma_3-\gamma_4) = \gamma_1\gamma_3-\gamma_1\gamma_4-\gamma_2\gamma_3 + \gamma_3\gamma_4.
\end{align*}
By adding or substracting these equalities, we have
\begin{equation}
	\gamma_1\gamma_3 + \gamma_2\gamma_4 = \frac{ab+\alpha_+\alpha_-\beta_+\beta_-}{2}, \quad \gamma_1\gamma_4 + \gamma_2\gamma_3 = \frac{ab-\alpha_+\alpha_-\beta_+\beta_-}{2}.
\end{equation}
Substituting (3.6) and $\gamma_1\gamma_2 = \gamma_3\gamma_4 = 1$ for (3.5), we obtain the equation of $\lambda_3$ in (3.4).
\end{proof}
\end{Lem}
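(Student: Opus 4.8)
The plan is to treat the three curves by a single device. Each $E_i$ is a double cover of the $u$-line branched over the four roots of the quartic on its right-hand side, so after a fractional linear change of the coordinate $u$ sending three of those roots to $0,1,\infty$ the curve acquires the Legendre shape $v^2 = u(u-1)(u-\lambda_i)$, and $\lambda_i$ is then the image of the fourth root, i.e.\ a cross-ratio of the four branch points. Everything therefore reduces to factoring each quartic and computing one cross-ratio.

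For $E_1$ the factorization is immediate from the choices in (3.3): since $(\alpha_+)^2 = a+2$ and $(\beta_+)^2 = b+2$,
\[
	(u^2-a-2)(u^2-b-2)=(u-\alpha_+)(u+\alpha_+)(u-\beta_+)(u+\beta_+),
\]
so the branch points are $\pm\alpha_+,\pm\beta_+$. I would apply the map $u\mapsto \frac{u-\alpha_+}{u+\alpha_+}\cdot\frac{\beta_++\alpha_+}{\beta_+-\alpha_+}$, which sends $\alpha_+\mapsto 0$, $-\alpha_+\mapsto\infty$, $\beta_+\mapsto 1$; the remaining point $-\beta_+$ then goes to $\bigl(\frac{\alpha_++\beta_+}{\alpha_+-\beta_+}\bigr)^2$, which is $\lambda_1$. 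The curve $E_2$ is word-for-word the same computation with $\alpha_+,\beta_+$ replaced by $\alpha_-,\beta_-$ (using $(\alpha_-)^2=a-2$, $(\beta_-)^2=b-2$), producing $\lambda_2$.

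The substantive case is $E_3:v^2=(u^2-au+1)(u^2-bu+1)$, whose quartic does not split visibly. Here I would first locate the roots: $u^2-au+1$ has discriminant $a^2-4=(a+2)(a-2)=(\alpha_+\alpha_-)^2$, hence roots $\gamma_{1,2}=\frac{a\pm\alpha_+\alpha_-}{2}$, and similarly $u^2-bu+1$ has roots $\gamma_{3,4}=\frac{b\pm\beta_+\beta_-}{2}$; in particular $\gamma_1\gamma_2=\gamma_3\gamma_4=1$ and $\gamma_1-\gamma_2=\alpha_+\alpha_-$, $\gamma_3-\gamma_4=\beta_+\beta_-$. Sending $\gamma_1,\gamma_2,\gamma_3$ to $0,\infty,1$ puts $E_3$ in Legendre form with $\lambda_3$ the cross-ratio $\frac{(\gamma_4-\gamma_2)(\gamma_3-\gamma_1)}{(\gamma_4-\gamma_1)(\gamma_3-\gamma_2)}=\frac{\gamma_1\gamma_4+\gamma_2\gamma_3-\gamma_1\gamma_2-\gamma_3\gamma_4}{\gamma_1\gamma_3+\gamma_2\gamma_4-\gamma_1\gamma_2-\gamma_3\gamma_4}$, this invariant being well defined only up to the usual $S_3$-ambiguity.

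The hard part will be collapsing this cross-ratio into a perfect square. The plan is to feed in $\gamma_1\gamma_2=\gamma_3\gamma_4=1$ together with the two mixed products, which I obtain by adding and subtracting the identities $ab=(\gamma_1+\gamma_2)(\gamma_3+\gamma_4)$ and $\alpha_+\alpha_-\beta_+\beta_-=(\gamma_1-\gamma_2)(\gamma_3-\gamma_4)$: this gives $\gamma_1\gamma_3+\gamma_2\gamma_4=\frac{ab+\alpha_+\alpha_-\beta_+\beta_-}{2}$ and $\gamma_1\gamma_4+\gamma_2\gamma_3=\frac{ab-\alpha_+\alpha_-\beta_+\beta_-}{2}$, whence $\lambda_3=\frac{ab-4-\alpha_+\alpha_-\beta_+\beta_-}{ab-4+\alpha_+\alpha_-\beta_+\beta_-}$. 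Finally I would match this against the claimed $\bigl(\frac{\alpha_+\beta_--\alpha_-\beta_+}{\alpha_+\beta_-+\alpha_-\beta_+}\bigr)^2$ by expanding the square: $(\alpha_+\beta_-)^2=(a+2)(b-2)$ and $(\alpha_-\beta_+)^2=(a-2)(b+2)$ sum to $2ab-8$ while the cross term is $2\alpha_+\alpha_-\beta_+\beta_-$, so the numerator and denominator of the square are exactly $2(ab-4-\alpha_+\alpha_-\beta_+\beta_-)$ and $2(ab-4+\alpha_+\alpha_-\beta_+\beta_-)$, matching the rational expression above and completing the identification.
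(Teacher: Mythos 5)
Your proposal is correct and follows essentially the same route as the paper: the identical factorizations of the three quartics, the same M\"obius maps sending three branch points to $0,\infty,1$ so that $\lambda_i$ is the cross-ratio image of the fourth, and the same key identities $\gamma_1\gamma_2=\gamma_3\gamma_4=1$, $\gamma_1\gamma_3+\gamma_2\gamma_4=\tfrac{ab+\alpha_+\alpha_-\beta_+\beta_-}{2}$, $\gamma_1\gamma_4+\gamma_2\gamma_3=\tfrac{ab-\alpha_+\alpha_-\beta_+\beta_-}{2}$ for $E_3$. The only difference is cosmetic: you carry out explicitly the final expansion showing the resulting ratio equals $\bigl(\tfrac{\alpha_+\beta_-\,-\,\alpha_-\beta_+}{\alpha_+\beta_-\,+\,\alpha_-\beta_+}\bigr)^{2}$, a verification the paper leaves implicit.
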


As square roots of $\lambda_1$ and $\lambda_2$, we choose
\begin{align}
\begin{aligned}
	\sqrt{\lambda_1} := \frac{\alpha_+\hspace{-0.3mm}+\beta_+}{\alpha_+\hspace{-0.3mm}-\beta_+},\\ \sqrt{\lambda_2} := \frac{\alpha_-\hspace{-0.3mm}-\beta_-}{\alpha_-\hspace{-0.3mm}+\beta_-}
\end{aligned}
\end{align}
and set $\hspace{-0.3mm}\sqrt{\lambda_1\lambda_2} := \sqrt{\lambda_1}\sqrt{\lambda_2}$ and $\hspace{-0.3mm}\sqrt{\lambda_1/\lambda_2} := \sqrt{\lambda_1}/\hspace{-0.3mm}\sqrt{\lambda_2}$.
\begin{Rmk}
Remark that $\hspace{-0.3mm}\sqrt{\lambda_1}-\hspace{-0.3mm}\sqrt{\lambda_2} \neq 0$ and $\hspace{-0.3mm}\sqrt{\lambda_1\lambda_2}-1 \neq 0$ by assumption of $a-b \neq 0$. Indeed, suppose that $\hspace{-0.3mm}\sqrt{\lambda_1}-\hspace{-0.3mm}\sqrt{\lambda_2} = 0$, then we obtain
\[
	(\alpha_+\hspace{-0.3mm}+\beta_+)(\alpha_-\hspace{-0.3mm}+\beta_-) = (\alpha_-\hspace{-0.3mm}+\beta_+)(\alpha_-\hspace{-0.3mm}-\beta_-)
\]
by (3.7). This equality can be simplified as $\hspace{-0.3mm}\alpha_+\beta_- = -\alpha_-\beta_+$. Squaring both sides, we have $a = b$. Hence, we obtain $\hspace{-0.3mm}\sqrt{\lambda_1}-\hspace{-0.3mm}\sqrt{\lambda_2} \neq 0$. We can prove $\hspace{-0.3mm}\sqrt{\lambda_1\lambda_2}-1 \neq 0$ in a similar way.
\end{Rmk}
\begin{Prop}
The reverse transformation of that in Lemma 3.3 is given by
\begin{align*}
	a &= \frac{2\bigl(\lambda_1\sqrt{\lambda_2}+\hspace{-0.3mm}\sqrt{\lambda_1}\lambda_2+4\sqrt{\lambda_1\lambda_2}+\hspace{-0.3mm}\sqrt{\lambda_1}+\hspace{-0.3mm}\sqrt{\lambda_2}\bigr)}{\bigl(\hspace{-0.3mm}\sqrt{\lambda_1}-\hspace{-0.3mm}\sqrt{\lambda_2}\bigr)\bigl(\hspace{-0.3mm}\sqrt{\lambda_1\lambda_2} - 1\bigr)},\\
	b &= \frac{2\bigl(\lambda_1\sqrt{\lambda_2}+\hspace{-0.3mm}\sqrt{\lambda_1}\lambda_2-4\sqrt{\lambda_1\lambda_2}+\hspace{-0.3mm}\sqrt{\lambda_1}+\hspace{-0.3mm}\sqrt{\lambda_2}\bigr)}{\bigl(\hspace{-0.3mm}\sqrt{\lambda_1}-\hspace{-0.3mm}\sqrt{\lambda_2}\bigr)\bigl(\hspace{-0.3mm}\sqrt{\lambda_1\lambda_2} - 1\bigr)}.
\end{align*}
\begin{proof}
By (3.7), we have
\begin{align*}
\begin{aligned}
	\sqrt{\lambda_1\lambda_2} = \frac{(\alpha_+\hspace{-0.3mm}+\beta_+)(\alpha_-\hspace{-0.3mm}-\beta_-)}{(\alpha_-\hspace{-0.3mm}-\beta_+)(\alpha_-\hspace{-0.3mm}+\beta_-)},\\\sqrt{\lambda_1/\lambda_2} = \frac{(\alpha_+\hspace{-0.3mm}+\beta_+)(\alpha_-\hspace{-0.3mm}+\beta_-)}{(\alpha_-\hspace{-0.3mm}-\beta_+)(\alpha_-\hspace{-0.3mm}-\beta_-)}.
\end{aligned}
\end{align*}
A tedious computation shows
\begin{align*}
\begin{aligned}
	\frac{\sqrt{\lambda_1\lambda_2}+1}{\sqrt{\lambda_1\lambda_2}-1} = \frac{\alpha_+\alpha_-\hspace{-0.3mm}-\beta_+\beta_-}{\alpha_-\beta_+\hspace{-0.3mm}-\alpha_+\beta_-},\\
	\frac{\sqrt{\lambda_1/\lambda_2}+1}{\sqrt{\lambda_1/\lambda_2}-1} = \frac{\alpha_+\alpha_-\hspace{-0.3mm}+\beta_+\beta_-}{\alpha_-\beta_+\hspace{-0.3mm}+\alpha_+\beta_-}.
\end{aligned}
\end{align*}
Hence, we have
\[
	\frac{\sqrt{\lambda_1\lambda_2}+1}{\sqrt{\lambda_1\lambda_2}-1} \cdot \frac{\sqrt{\lambda_1/\lambda_2}+1}{\sqrt{\lambda_1/\lambda_2}-1} = \frac{(a^2-4)-(b^2-4)}{(a-2)(b+2)-(a+2)(b-2)} = \frac{(a+b)(a-b)}{4(a-b)}.
\]
By assumption of $a-b \neq 0$, thus we obtain
\begin{equation}
	a+b = 4\cdot\frac{\bigl(\hspace{-0.3mm}\sqrt{\lambda_1}+\hspace{-0.3mm}\sqrt{\lambda_2}\bigr)\bigl(\hspace{-0.3mm}\sqrt{\lambda_1\lambda_2}+1\bigr)}{\bigl(\hspace{-0.3mm}\sqrt{\lambda_1}-\hspace{-0.3mm}\sqrt{\lambda_2}\bigr)\bigl(\hspace{-0.3mm}\sqrt{\lambda_1\lambda_2} - 1\bigr)}
\end{equation}

On the other hand, we can compute
\begin{align}
\begin{aligned}
	\sqrt{\lambda_1}-\hspace{-0.3mm}\sqrt{\lambda_2} = \frac{2(\alpha_-\beta_+\hspace{-0.3mm}+\alpha_+\beta_-)}{(\alpha_+\hspace{-0.3mm}-\beta_+)\bigl(\alpha_-\hspace{-0.3mm}+\beta_-)},\\
	\sqrt{\lambda_1\lambda_2} - 1 = \frac{2(\alpha_-\beta_-\hspace{-0.3mm}+\alpha_+\beta_-)}{(\alpha_+\hspace{-0.3mm}-\beta_+)\bigl(\alpha_-\hspace{-0.3mm}+\beta_-)}
\end{aligned}
\end{align}
and one can check that
\begin{equation}
	a-b = 16 \cdot \frac{\sqrt{\lambda_1\lambda_2}}{\bigl(\hspace{-0.3mm}\sqrt{\lambda_1}-\hspace{-0.3mm}\sqrt{\lambda_2}\bigr)\bigl(\hspace{-0.3mm}\sqrt{\lambda_1\lambda_2} - 1\bigr)}.
\end{equation}
The proposition follows from (3.8) and (3.10).
\end{proof}
\end{Prop}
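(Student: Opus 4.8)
The plan is to invert the formulas of Lemma 3.3 and solve for $a,b$. Since the relations (3.3) treat $a$ and $b$ symmetrically apart from the signs hidden inside the square roots, I would not attack $a$ and $b$ directly but instead recover the symmetric combination $a+b$ and the antisymmetric combination $a-b$, after which $a=\tfrac12\big((a+b)+(a-b)\big)$ and $b=\tfrac12\big((a+b)-(a-b)\big)$ finish the job. The organizing principle is that, although (3.3) fixes $\alpha_\pm,\beta_\pm$ only up to sign, every combination I form will square to a polynomial in $a,b$, so no sign ambiguity survives into the final formulas.

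For the symmetric part I would express $\sqrt{\lambda_1\lambda_2}=\sqrt{\lambda_1}\,\sqrt{\lambda_2}$ and $\sqrt{\lambda_1/\lambda_2}=\sqrt{\lambda_1}/\sqrt{\lambda_2}$ as quotients in $\alpha_\pm,\beta_\pm$ via (3.7), and then pass to the Cayley-type quantities $\frac{\sqrt{\lambda_1\lambda_2}+1}{\sqrt{\lambda_1\lambda_2}-1}$ and $\frac{\sqrt{\lambda_1/\lambda_2}+1}{\sqrt{\lambda_1/\lambda_2}-1}$. The point of these is that their numerators and denominators collapse respectively to $\alpha_+\alpha_-\mp\beta_+\beta_-$ and $\alpha_-\beta_+\mp\alpha_+\beta_-$, so that their product is a ratio of differences of squares $\frac{(\alpha_+\alpha_-)^2-(\beta_+\beta_-)^2}{(\alpha_-\beta_+)^2-(\alpha_+\beta_-)^2}$. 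By (3.3) the numerator is $(a^2-4)-(b^2-4)=(a+b)(a-b)$ and the denominator is $(a-2)(b+2)-(a+2)(b-2)=4(a-b)$, so the product equals $\tfrac14(a+b)$. Re-expressing $\sqrt{\lambda_1/\lambda_2}$ through $\sqrt{\lambda_1},\sqrt{\lambda_2}$ then gives $a+b$ as a rational function of $\sqrt{\lambda_1},\sqrt{\lambda_2},\sqrt{\lambda_1\lambda_2}$.

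For the antisymmetric part I would compute $\sqrt{\lambda_1}-\sqrt{\lambda_2}$ and $\sqrt{\lambda_1\lambda_2}-1$ from (3.7): over the common denominator $(\alpha_+-\beta_+)(\alpha_-+\beta_-)$ their numerators reduce to $2(\alpha_-\beta_++\alpha_+\beta_-)$ and $2(\alpha_-\beta_+-\alpha_+\beta_-)$, so their product has numerator $4\big((\alpha_-\beta_+)^2-(\alpha_+\beta_-)^2\big)=16(a-b)$. Dividing $\sqrt{\lambda_1\lambda_2}$, whose numerator is $(\alpha_++\beta_+)(\alpha_--\beta_-)$ over the same denominator, by this product and invoking the factorization $(\alpha_+^2-\beta_+^2)(\alpha_-^2-\beta_-^2)=(a-b)^2$ makes the common-denominator powers cancel and leaves $\tfrac{1}{16}(a-b)$, giving $a-b$ as a multiple of $\sqrt{\lambda_1\lambda_2}\big/\big((\sqrt{\lambda_1}-\sqrt{\lambda_2})(\sqrt{\lambda_1\lambda_2}-1)\big)$. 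Substituting the two closed forms into the half-sum and half-difference and expanding $(\sqrt{\lambda_1}+\sqrt{\lambda_2})(\sqrt{\lambda_1\lambda_2}+1)=\lambda_1\sqrt{\lambda_2}+\sqrt{\lambda_1}\lambda_2+\sqrt{\lambda_1}+\sqrt{\lambda_2}$ then yields the asserted expressions for $a$ and $b$.

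I expect the main obstacle to be organizational rather than conceptual: the argument juggles four interlocking square roots $\alpha_\pm,\beta_\pm$, and it goes through only because every step is arranged so that these occur in combinations that become rational in $a,b$ after squaring. One must also verify that the quotients are legitimate, i.e.\ that $\sqrt{\lambda_1}-\sqrt{\lambda_2}\neq0$ and $\sqrt{\lambda_1\lambda_2}-1\neq0$; this is precisely Remark 3.4, valid under the standing hypothesis $a\neq b$ from Lemma 3.1.
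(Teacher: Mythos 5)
Your proposal is correct and takes essentially the same route as the paper's own proof: it recovers $a+b$ from the product of the Cayley-type quotients $\frac{\sqrt{\lambda_1\lambda_2}+1}{\sqrt{\lambda_1\lambda_2}-1}\cdot\frac{\sqrt{\lambda_1/\lambda_2}+1}{\sqrt{\lambda_1/\lambda_2}-1}$, recovers $a-b$ from $\sqrt{\lambda_1\lambda_2}$ divided by the product $\bigl(\sqrt{\lambda_1}-\sqrt{\lambda_2}\bigr)\bigl(\sqrt{\lambda_1\lambda_2}-1\bigr)$ using $(\alpha_+^2-\beta_+^2)(\alpha_-^2-\beta_-^2)=(a-b)^2$, and finishes with the half-sum and half-difference, invoking the nonvanishing from Remark 3.4 exactly as the paper does. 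Incidentally, your numerator $2(\alpha_-\beta_+-\alpha_+\beta_-)$ for $\sqrt{\lambda_1\lambda_2}-1$ is the correct one; the corresponding display (3.9) in the paper has a sign/subscript typo there.
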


\begin{Rmk}
By using Lemma 3.3 and (3.9), we obtain
\[
	\lambda_3 = \biggl(\frac{\sqrt{\lambda_1\lambda_2}-1}{\sqrt{\lambda_1}-\hspace{-0.3mm}\sqrt{\lambda_2}}\biggr)^{\!\!2}.
\]
One can see that this equation coincides with (2.4).
\end{Rmk}
\section{Proof of the main theorem}
\setcounter{equation}{0}
In this section, we will show the theorem stated in Introduction.
\setcounter{section}{1}
\begin{Thm}
Assume that $C: y^2 = (x^4 - ax^2 + 1)(x^4 - bx^2 + 1)$ is nonsingular and superspecial. Then $a$ and $b$ belong to $\mathbb{F}_{p^2}$. Moreover, the followings are true:
\begin{itemize}
\item If $p \equiv 3 \pmod{4}$, then $C$ is maximal over $\mathbb{F}_{p^2}$.\vspace{-1mm}
\item If $p \equiv 1 \pmod{4}$, then $C$ is minimal over $\mathbb{F}_{p^2}$.
\end{itemize}
In particular, the curve $C$ is maximal or minimal over $\mathbb{F}_{p^2}$.
\end{Thm}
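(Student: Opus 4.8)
The plan is to reduce the point count of $C$ to those of the three elliptic quotients and then to pin down the trace of Frobenius on each. Granting for the moment that $a,b\in\mathbb{F}_{p^2}$, the covers $C\to E_i$ and hence the degree-$2^3$ isogeny $\mathrm{Jac}(C)\to E_1\times E_2\times E_3$ are defined over $\mathbb{F}_{p^2}$, so that
\[
\#C(\mathbb{F}_{p^2})=p^2+1-(a_1+a_2+a_3),
\]
where $a_i$ is the trace of the $\mathbb{F}_{p^2}$-Frobenius on $E_i$. Since $C$ is superspecial, each $E_i$ is supersingular, whence $a_i\in\{0,\pm p,\pm2p\}$ and $|a_1+a_2+a_3|\le 6p$. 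Because the Hasse--Weil bound for genus $3$ over $\mathbb{F}_{p^2}$ is exactly $p^2+1\pm 6p$, the curve $C$ is maximal (resp.\ minimal) precisely when $a_1=a_2=a_3=-2p$ (resp.\ $=+2p$). Thus everything reduces to showing that each supersingular $E_i$ is itself maximal or minimal over $\mathbb{F}_{p^2}$, with one common sign governed by $p\bmod 4$.

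For the field-of-definition claim I would argue as follows. A supersingular Legendre curve has its $\lambda$-invariant a root of the Hasse polynomial $H_p\in\mathbb{F}_p[t]$ of Example 2.5, and it is classical that all such roots lie in $\mathbb{F}_{p^2}$; hence $\lambda_1,\lambda_2,\lambda_3\in\mathbb{F}_{p^2}$. Feeding these into the inverse transformation of Proposition 3.5, the quantities $a+b$ and $a-b$ become rational expressions in $\sqrt{\lambda_1},\sqrt{\lambda_2},\sqrt{\lambda_1\lambda_2}$. Using Remark 3.4 (so that the denominators $\sqrt{\lambda_1}-\sqrt{\lambda_2}$ and $\sqrt{\lambda_1\lambda_2}-1$ do not vanish), I would check that these expressions are invariant under the simultaneous sign change $(\sqrt{\lambda_1},\sqrt{\lambda_2})\mapsto(-\sqrt{\lambda_1},-\sqrt{\lambda_2})$, which is exactly the ambiguity left after fixing $\lambda_1,\lambda_2\in\mathbb{F}_{p^2}$. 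The $\mathbb{F}_{p^2}$-Frobenius acts through this ambiguity, so $a+b$ and $a-b$, and therefore $a$ and $b$, are fixed by it, giving $a,b\in\mathbb{F}_{p^2}$.

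To determine the common sign I would compare each quotient in its Howe model with a Legendre model. Factoring as in Lemma 3.3, the isomorphism carrying $E_1:v^2=(u^2-\alpha_+^2)(u^2-\beta_+^2)$ (and likewise $E_2,E_3$) to its Legendre form is defined only over $\mathbb{F}_{p^2}(\alpha_\pm,\beta_\pm)$, so it exhibits $E_i$ as a quadratic twist of $E_{\lambda_i}$; the twisting class is measured by the quadratic character over $\mathbb{F}_{p^2}$ of the scaling factor appearing in the $v$-substitution, i.e.\ of suitable products among $a\pm2,b\pm2$. Since a quadratic twist by a nonsquare interchanges maximal and minimal, the task is to show that all three twisting factors share one quadratic character and that this character switches with $p\bmod 4$; matching the two cases then yields maximality for $p\equiv3\pmod4$ and minimality for $p\equiv1\pmod4$. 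Here I would use the explicit relation $\lambda_3=\bigl((\sqrt{\lambda_1\lambda_2}-1)/(\sqrt{\lambda_1}-\sqrt{\lambda_2})\bigr)^2$ of Remark 3.6 together with the square-root bookkeeping of the previous paragraph to collapse the three characters into a single condition, which should reduce to whether $-1$ is a square in $\mathbb{F}_p$.

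The main obstacle is precisely this last sign determination. The superspeciality hypothesis is equivalent to the vanishing of the Hasse--Witt matrix of $C$, and such a first-order (mod $p$) condition can only certify that each $a_i\equiv 0\pmod p$; it cannot by itself separate $-2p$ from $+2p$. One must therefore extract genuinely finer information, and the delicate part is to prove that the three twist characters are forced to coincide and to be controlled \emph{uniformly}---over all superspecial $(a,b)$---by $p\bmod 4$ alone, rather than by the individual arithmetic of $\lambda_1,\lambda_2,\lambda_3$. I expect the relations among the $\lambda_i$ and the common origin of all the square roots from $\alpha_\pm,\beta_\pm$ to be exactly what makes this uniformity hold.
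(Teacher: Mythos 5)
Your first paragraph matches the paper's reduction: the $\mathbb{F}_{p^2}$-isogeny $\mathrm{Jac}(C)\to E_1\times E_2\times E_3$ of $2$-power degree reduces everything to showing that each $E_i$, put in Legendre form, is maximal or minimal with a common sign determined by $p \bmod 4$. But both remaining steps have genuine gaps, and they are exactly the points where the paper invokes the Auer--Top result (Proposition 4.1): for a supersingular Legendre curve $y^2=x(x-1)(x-\lambda)$, (i) $\lambda$ is a \emph{fourth power} in $(\mathbb{F}_{p^2})^{\times}$, and (ii) the curve is maximal over $\mathbb{F}_{p^2}$ if $p\equiv 3 \pmod 4$ and minimal if $p\equiv 1\pmod 4$. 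You never use either half. Part (ii) is precisely the ``sign determination'' that your last paragraph concedes is out of reach: you are right that superspeciality alone only gives traces divisible by $p$, and the paper does not try to refine that mod-$p$ information; the sign comes entirely from (ii) applied to the Legendre models. Moreover, your stated goal for the twist analysis --- that the three twist characters coincide and ``switch with $p\bmod 4$'' --- is not what is needed and not what is true: the paper's Lemma 4.2 shows the twists are \emph{trivial}, i.e.\ $a\pm2$ and $b\pm 2$ are squares in $\mathbb{F}_{p^2}$, so that $\alpha_\pm,\beta_\pm\in\mathbb{F}_{p^2}$ and the isomorphisms of Lemma 3.3 are themselves $\mathbb{F}_{p^2}$-rational. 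This is proved using (i) twice: numerators such as $4\sqrt{\lambda_1}\bigl(\sqrt{\lambda_2}+1\bigr)^2$ are squares because each $\sqrt{\lambda_i}$ is a square, and the denominator $\bigl(\sqrt{\lambda_1}-\sqrt{\lambda_2}\bigr)\bigl(\sqrt{\lambda_1\lambda_2}-1\bigr)$ is a square because the ratio of its two factors is $\sqrt{\lambda_3}$ (Remark 3.6), again a square by (i). The dependence on $p\bmod 4$ enters only through (ii), never through the twist classes.

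Second, your descent argument for $a,b\in\mathbb{F}_{p^2}$ fails. After fixing $\lambda_1,\lambda_2\in\mathbb{F}_{p^2}$, the Frobenius ambiguity on the square roots is not only the simultaneous sign change: $\mathbb{F}_{p^2}\bigl(\sqrt{\lambda_1},\sqrt{\lambda_2}\bigr)$ can have Galois group $(\mathbb{Z}/2)^2$ over $\mathbb{F}_{p^2}$, and under the independent change $\sqrt{\lambda_1}\mapsto-\sqrt{\lambda_1}$, $\sqrt{\lambda_2}\mapsto\sqrt{\lambda_2}$ (which also negates $\sqrt{\lambda_1\lambda_2}$) the formulas of Proposition 3.5 are genuinely altered, not permuted. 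Worse, even the simultaneous sign change does not fix $a$ and $b$: it fixes $a+b$ but negates the denominator $\bigl(\sqrt{\lambda_1}-\sqrt{\lambda_2}\bigr)\bigl(\sqrt{\lambda_1\lambda_2}-1\bigr)$ while fixing the numerator $16\sqrt{\lambda_1\lambda_2}$ of (3.10), hence it negates $a-b$; that is, it swaps $a$ and $b$. So your invariance claim is false, and the most such an argument could give is that the set $\{a,b\}$ is Galois-stable, leaving open the possibility that $a$ and $b$ are conjugate elements of $\mathbb{F}_{p^4}\setminus\mathbb{F}_{p^2}$. The paper needs no descent at all: by (i), $\sqrt{\lambda_i}\in\mathbb{F}_{p^2}$ for each $i$, and Proposition 3.5 then yields $a,b\in\mathbb{F}_{p^2}$ directly.
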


\setcounter{section}{4}
\setcounter{Def}{0}
A key to the proof is the following result by Auer and Top \cite[Proposition 2.2]{Top}.
\begin{Prop}
Let $E: y^2 = x(x-1)(x-\lambda)$ be a supersingular elliptic curve, then $\lambda$ is a 4th power in $(\mathbb{F}_{p^2}\hspace{-0.3mm})^{\hspace{-0.3mm}\times}$. Moreover, the followings are true:
\begin{itemize}
\item If $p \equiv 3 \pmod{4}$, then a elliptic curve $E$ is maximal over $\mathbb{F}_{p^2}$.\vspace{-1mm}
\item If $p \equiv 1 \pmod{4}$, then a elliptic curve $E$ is minimal over $\mathbb{F}_{p^2}$.
\end{itemize}
In particular, the elliptic curve $E$ is maximal or minimal over $\mathbb{F}_{p^2}$.
\end{Prop}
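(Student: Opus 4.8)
The plan is to combine the classical structure theory of supersingular elliptic curves over $\mathbb{F}_{p^2}$ with the fact that the Legendre form carries its full $2$-torsion rationally. Write $q = p^2$ and let $\pi$ denote the $q$-power Frobenius endomorphism of $E$. First I would record that $\lambda \in \mathbb{F}_{p^2}$: the curve $E$ is supersingular exactly when $\lambda$ is a root of the Hasse polynomial $H_p(t)$ of Example 2.5, and it is classical (Deuring, Igusa) that every root of $H_p$ lies in $\mathbb{F}_{p^2}$; this is also what makes the statement ``$\lambda$ is a $4$th power in $(\mathbb{F}_{p^2})^{\times}$'' meaningful. Consequently the three nontrivial $2$-torsion points $(0,0)$, $(1,0)$, $(\lambda,0)$ are all $\mathbb{F}_{p^2}$-rational, so $E[2] \subset E(\mathbb{F}_{p^2})$ and therefore $4 \mid \#E(\mathbb{F}_{p^2})$.

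Next I would prove the maximal-or-minimal dichotomy by a parity argument. By Waterhouse's description of supersingular curves over $\mathbb{F}_{p^2}$, the Frobenius trace $t$ satisfies $p \mid t$ and $|t| \leq 2p$, so $t \in \{0, \pm p, \pm 2p\}$ and $\#E(\mathbb{F}_{p^2}) = p^2 + 1 - t$. Since $p$ is odd, among these five values only $t = \pm 2p$ give a point count divisible by $4$ (the others being odd when $t = \pm p$, or $\equiv 2 \pmod 4$ when $t = 0$). Combined with $4 \mid \#E(\mathbb{F}_{p^2})$ from the previous step, this forces $t = \mp 2p$, i.e.\ $E$ is maximal ($\#E = (p+1)^2$, so $\pi = -p$) or minimal ($\#E = (p-1)^2$, so $\pi = p$); here $\pi$ is a genuine scalar because $(\pi \pm p)^2 = 0$ has no nonzero nilpotent solution in the nilpotent-free endomorphism ring. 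This already yields the final ``in particular'' clause.

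To decide the sign, and simultaneously to see that $\lambda$ is a square, I would use a point of order $4$ lying over $(0,0)$. Its $x$-coordinate is a square root of $\lambda$ in $\overline{\mathbb{F}_p}$; since $\pi = \mp p$ is an odd scalar it sends such a point to $\pm$ itself, hence fixes its $x$-coordinate, which forces $\sqrt{\lambda} \in \mathbb{F}_{p^2}$. Writing $s = \sqrt{\lambda}$, the corresponding $y$-coordinate $\eta$ satisfies $\eta^2 = s(s-1)(s - s^2) = -\bigl(s(s-1)\bigr)^2$, and $-1$ is a square in $\mathbb{F}_{p^2}$ (as $4 \mid p^2 - 1$), so in fact $\eta \in \mathbb{F}_{p^2}$ and the whole order-$4$ point $P = (s, \eta)$ is $\mathbb{F}_{p^2}$-rational. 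Then $\pi P = P$, while $\pi = \mp p$, so $[\mp p - 1]P = O$; as $P$ has order $4$ this gives $\mp p \equiv 1 \pmod 4$. With the dichotomy of the previous step this excludes the minimal case when $p \equiv 3 \pmod 4$ and the maximal case when $p \equiv 1 \pmod 4$, proving the two bulleted assertions.

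It remains to upgrade ``$\lambda$ is a square'' to ``$\lambda$ is a $4$th power'', and this I expect to be the main obstacle. The difficulty is that the $4$th-power property is invisible to torsion alone: a direct computation with the halving formulas shows that $E[4] \subset E(\mathbb{F}_{p^2})$ is equivalent merely to $\lambda$ and $1-\lambda$ both being squares, which we already have, so full rational $4$-torsion does not separate $4$th powers from non-$4$th-power squares. My plan is to work with the quartic residue character $\chi_4$ of $\mathbb{F}_{p^2}^{\times}$: since $\lambda = s^2$ is a square, $\lambda$ is a $4$th power precisely when $s$ is a square, i.e.\ when $\chi_4(\lambda) = \chi_4(s)^2 = 1$, and I would read this off from the Frobenius and norm structure of a supersingular $\lambda$. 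When $\lambda \in \mathbb{F}_p$ and $p \equiv 3 \pmod 4$, every element of $\mathbb{F}_p^{\times}$ is automatically a $4$th power in $\mathbb{F}_{p^2}^{\times}$ (because $4 \mid p+1$); for a Frobenius-conjugate pair one instead controls $N_{\mathbb{F}_{p^2}/\mathbb{F}_p}(\lambda) = \lambda^{p+1}$ and places $\lambda$ in the norm-one subgroup, which for $p \equiv 1 \pmod 4$ is exactly the group of $4$th powers. Packaging these cases into one uniform argument -- ideally evaluating $\chi_4(\lambda)$ directly from the action of $\pi$ on a suitable $4$-torsion point -- is the delicate point, and is where I expect to spend most of the effort.
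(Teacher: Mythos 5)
Your argument for everything except the fourth-power claim is correct and self-contained, and it is worth noting that the paper itself offers no proof here at all: Proposition 4.1 is simply quoted from Auer--Top, so for the dichotomy and the two bullet items your proposal is a genuine alternative. The chain is sound: full rational $2$-torsion gives $4 \mid \#E(\mathbb{F}_{p^2})$; Waterhouse's list $t \in \{0, \pm p, \pm 2p\}$ together with the parity count ($\#E$ odd for $t = \pm p$, $\equiv 2 \pmod 4$ for $t = 0$) leaves only $t = \pm 2p$, with $\pi = \mp p$ a scalar since the endomorphism ring has no nilpotents; and the halving formula puts a point $P = \bigl(\sqrt{\lambda}, \eta\bigr)$ of order $4$ above $(0,0)$, whose rationality (using that $\pi$ is an odd scalar to fix $x(P)$, and $\eta^2 = -\bigl(s(s-1)\bigr)^2$ with $-1$ a square in $\mathbb{F}_{p^2}$) forces $\mp p \equiv 1 \pmod 4$ and so pins the sign to $p \bmod 4$ exactly as stated.

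However, the assertion that $\lambda$ is a \emph{fourth} power is part of the Proposition, you explicitly defer it, and the sketch you give for it breaks down. Concretely: the norm-one subgroup of $\mathbb{F}_{p^2}^{\times}$ has order $p+1$, while the fourth powers form the subgroup of order $(p^2-1)/4$, so the two coincide only for $p = 5$; the usable statement is the containment $\mu_{p+1} \subseteq (\mathbb{F}_{p^2}^{\times})^4$, valid precisely when $4 \mid p-1$. Even granting that, you have no argument that $N_{\mathbb{F}_{p^2}/\mathbb{F}_p}(\lambda) = 1$: supersingularity gives $H_p(\lambda) = 0$ and hence that $\lambda^p$ is again a root of $H_p$, but all this yields is that $\lambda^p$ lies in the six-element orbit $\{\lambda,\, 1/\lambda,\, 1-\lambda,\, 1/(1-\lambda),\, \lambda/(\lambda-1),\, (\lambda-1)/\lambda\}$; if, say, $\lambda^p = 1-\lambda$, then $\lambda^{p+1} = \lambda(1-\lambda)$ and the norm-one route says nothing. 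Your torsion computation yields only $\sqrt{\lambda} \in \mathbb{F}_{p^2}$, and as you yourself observe, rationality of $E[4]$ cannot separate squares from fourth powers (nor does maximality rescue you: an order-$8$ point halving $P$ exists in $E(\mathbb{F}_{p^2}) \cong (\mathbb{Z}/(p \mp 1)\mathbb{Z})^2$ only when $8 \mid p \mp 1$). The gap is not harmless for the paper: the proof of the first half of Theorem 1.1 would survive on $\sqrt{\lambda_i} \in \mathbb{F}_{p^2}$ alone, but Lemma 4.2 genuinely uses the fourth-power property, since it needs $\sqrt{\lambda_3}$ to be a \emph{square} in $\mathbb{F}_{p^2}$. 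So as it stands the proposal proves strictly less than the Proposition, and the missing part is exactly the point to which Auer--Top devote their own separate argument.
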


Recall the discussions in Section 3. Let $E_1,E_2$ and $E_3$ be the following three elliptic curves:
\begin{align*}
	E_1 &: v^2 = (u^2-a-2)(u^2-b-2),\\
	E_2 &: v^2 = (u^2-a+2)(u^2-b+2),\\
	E_3 &: v^2 = (u^2-au+1)(u^2-bu+1).
\end{align*}
Then, there exist morphisms $C \rightarrow E_i$ of degree 2. The morphisms are defined over $\mathbb{F}_{p^2}$, since so are the involutions defining these quotient.

\begin{proof}[Proof of the first half of Theorem 1.1]
The elliptic curves $E_i$ are supersingular by the assumption since the quotient of supersingular curve is supersingular. By Lemma 3.3, each elliptic curve $E_i$ is isomorphic to $y^2 = x(x-1)(x-\lambda_i)$. By Proposition 4.1, we obtain $\lambda_i$ is a 4th power in $(\mathbb{F}_{p^2}\hspace{-0.3mm})^{\hspace{-0.3mm}\times}$ and thus $\sqrt{\lambda_i} \in \mathbb{F}_{p^2}$. Hence, it follows from Proposition 3.4 that $a,b \in \mathbb{F}_{p^2}$.
\end{proof}

We have shown that $C$ is defined over $\mathbb{F}_{p^2}$. Next, let us discuss whether $C$ is maximal or minimal over $\mathbb{F}_{p^2}$. Here, we prepare the following lemma:
\begin{Lem}
If $C$ is superspecial, then $\alpha_+,\alpha_-,\beta_+$ and $\beta_-$ in (3.3) are elements of $\mathbb{F}_{p^2}$.
\begin{proof}
The elliptic curve $E_i$ is supersingular by the assumption, thus $\lambda_1,\lambda_2$ and $\lambda_3$ in (3.4) are 4th powers in $(\mathbb{F}_{p^2}\hspace{-0.3mm})^\times$ by Proposition 4.1. Here, one can compute that
\begin{align*}
	a-2 &= \frac{4\sqrt{\lambda_1}(\hspace{-0.3mm}\sqrt{\lambda_2}+1)^2}{\bigl(\hspace{-0.3mm}\sqrt{\lambda_1}-\hspace{-0.3mm}\sqrt{\lambda_2}\bigr)\bigl(\hspace{-0.3mm}\sqrt{\lambda_1\lambda_2} - 1\bigr)}, \quad a+2 = \frac{4\sqrt{\lambda_2}(\hspace{-0.3mm}\sqrt{\lambda_1}+1)^2}{\bigl(\hspace{-0.3mm}\sqrt{\lambda_1}-\hspace{-0.3mm}\sqrt{\lambda_2}\bigr)\bigl(\hspace{-0.3mm}\sqrt{\lambda_1\lambda_2} - 1\bigr)},\\
	b-2 &= \frac{4\sqrt{\lambda_1}(\hspace{-0.3mm}\sqrt{\lambda_2}-1)^2}{\bigl(\hspace{-0.3mm}\sqrt{\lambda_1}-\hspace{-0.3mm}\sqrt{\lambda_2}\bigr)\bigl(\hspace{-0.3mm}\sqrt{\lambda_1\lambda_2} - 1\bigr)}, \hspace{0.5mm}\quad b+2 = \frac{4\sqrt{\lambda_2}(\hspace{-0.3mm}\sqrt{\lambda_1}-1)^2}{\bigl(\hspace{-0.3mm}\sqrt{\lambda_1}-\hspace{-0.3mm}\sqrt{\lambda_2}\bigr)\bigl(\hspace{-0.3mm}\sqrt{\lambda_1\lambda_2} - 1\bigr)}
\end{align*}
by using Proposition 3.4, and so it suffices to show that $\bigl(\hspace{-0.3mm}\sqrt{\lambda_1}-\hspace{-0.3mm}\sqrt{\lambda_2}\bigr)\bigl(\hspace{-0.3mm}\sqrt{\lambda_1\lambda_2} - 1\bigr)$ is a square in $\mathbb{F}_{p^2}$.

\newpage \noindent It follows from Remark 3.6 that 
\[
	\sqrt{\lambda_3} = \frac{\sqrt{\lambda_1\lambda_2}-1}{\sqrt{\lambda_1}-\hspace{-0.3mm}\sqrt{\lambda_2}}
\]
is a square in $\mathbb{F}_{p^2}$, and thus whether $\sqrt{\lambda_1}-\hspace{-0.3mm}\sqrt{\lambda_2}$ is a square in $\mathbb{F}_{p^2\hspace{-0.3mm}}$ is in accord with whether $\sqrt{\lambda_1\lambda_2}-1$ is a square in $\mathbb{F}_{p^2}$. This means that a product $\bigl(\hspace{-0.3mm}\sqrt{\lambda_1}-\hspace{-0.3mm}\sqrt{\lambda_2}\bigr)\bigl(\hspace{-0.3mm}\sqrt{\lambda_1\lambda_2} - 1\bigr)$ is a square in $\mathbb{F}_{p^2}$.
\end{proof}
\end{Lem}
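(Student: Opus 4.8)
The plan is to reduce the assertion that $\alpha_+,\alpha_-,\beta_+,\beta_-$ lie in $\mathbb{F}_{p^2}$ to the statement that $a\pm 2$ and $b\pm 2$ are \emph{squares} in $\mathbb{F}_{p^2}$, since by (3.3) these four quantities are exactly $(\alpha_\pm)^2$ and $(\beta_\pm)^2$. First I would record the consequences of superspeciality: because each $E_i$ is a quotient of the superspecial curve $C$, each $E_i$ is supersingular, so Proposition 4.1 gives that $\lambda_1,\lambda_2,\lambda_3$ are $4$th powers in $(\mathbb{F}_{p^2})^\times$. In particular the roots $\sqrt{\lambda_1},\sqrt{\lambda_2}$ chosen in (3.7) lie in $\mathbb{F}_{p^2}$, and — this is the point I would stress — each $\sqrt{\lambda_i}$ is itself a square in $\mathbb{F}_{p^2}$: writing $\lambda_i=c_i^4$ with $c_i\in\mathbb{F}_{p^2}$ we have $\sqrt{\lambda_i}=\pm c_i^2$, and both signs give squares because $-1$ is a square in $\mathbb{F}_{p^2}$ (as $p^2\equiv 1\pmod 4$).

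Next I would use Proposition 3.4 to express the four quantities explicitly. A direct computation from the formulas for $a$ and $b$ there yields
\begin{align*}
a-2 &= \frac{4\sqrt{\lambda_1}(\sqrt{\lambda_2}+1)^2}{(\sqrt{\lambda_1}-\sqrt{\lambda_2})(\sqrt{\lambda_1\lambda_2}-1)}, &
a+2 &= \frac{4\sqrt{\lambda_2}(\sqrt{\lambda_1}+1)^2}{(\sqrt{\lambda_1}-\sqrt{\lambda_2})(\sqrt{\lambda_1\lambda_2}-1)},\\
b-2 &= \frac{4\sqrt{\lambda_1}(\sqrt{\lambda_2}-1)^2}{(\sqrt{\lambda_1}-\sqrt{\lambda_2})(\sqrt{\lambda_1\lambda_2}-1)}, &
b+2 &= \frac{4\sqrt{\lambda_2}(\sqrt{\lambda_1}-1)^2}{(\sqrt{\lambda_1}-\sqrt{\lambda_2})(\sqrt{\lambda_1\lambda_2}-1)},
\end{align*}
all sharing the common denominator $(\sqrt{\lambda_1}-\sqrt{\lambda_2})(\sqrt{\lambda_1\lambda_2}-1)$, which is nonzero by Remark 3.5. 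In each numerator the factor $4=2^2$ is a square, $(\sqrt{\lambda_j}\pm 1)^2$ is a square, and $\sqrt{\lambda_i}$ is a square by the previous paragraph; hence every numerator is a square in $\mathbb{F}_{p^2}$. The problem therefore collapses to showing that the single denominator $(\sqrt{\lambda_1}-\sqrt{\lambda_2})(\sqrt{\lambda_1\lambda_2}-1)$ is a square in $\mathbb{F}_{p^2}$.

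Finally I would invoke Remark 3.6, which identifies $\sqrt{\lambda_3}=(\sqrt{\lambda_1\lambda_2}-1)/(\sqrt{\lambda_1}-\sqrt{\lambda_2})$. Since $\lambda_3$ is a $4$th power, $\sqrt{\lambda_3}$ is a square in $\mathbb{F}_{p^2}$ by the same reasoning as before, so the \emph{ratio} of $\sqrt{\lambda_1\lambda_2}-1$ and $\sqrt{\lambda_1}-\sqrt{\lambda_2}$ is a square; this forces their \emph{product} to be a square as well, via
\[
(\sqrt{\lambda_1}-\sqrt{\lambda_2})(\sqrt{\lambda_1\lambda_2}-1)=(\sqrt{\lambda_1}-\sqrt{\lambda_2})^2\sqrt{\lambda_3},
\]
a product of two squares. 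This finishes the reduction and hence the lemma. The step I expect to be the real crux is this last one: the passage from the ratio being a square (Remark 3.6) to the product being a square, which is exactly where one genuinely uses that $\lambda_3$ — and not merely $\lambda_1,\lambda_2$ separately — is a $4$th power, i.e. that $E_3$ (and not only $E_1,E_2$) is supersingular. Everything preceding it is bookkeeping with Proposition 3.4, the only caveat being that all denominators are nonzero (Remark 3.5) so that the manipulations are legitimate.
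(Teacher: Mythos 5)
Your proposal is correct and follows essentially the same route as the paper: reduce to showing $\bigl(\sqrt{\lambda_1}-\sqrt{\lambda_2}\bigr)\bigl(\sqrt{\lambda_1\lambda_2}-1\bigr)$ is a square via the formulas for $a\pm2$, $b\pm2$ from Proposition 3.4, then conclude using Remark 3.6. Your writeup is in fact slightly more explicit than the paper's on two points it leaves implicit --- that $\sqrt{\lambda_i}$ is itself a square in $\mathbb{F}_{p^2}$ because $\lambda_i$ is a $4$th power and $-1$ is a square in $\mathbb{F}_{p^2}$, and the identity $\bigl(\sqrt{\lambda_1}-\sqrt{\lambda_2}\bigr)\bigl(\sqrt{\lambda_1\lambda_2}-1\bigr)=\bigl(\sqrt{\lambda_1}-\sqrt{\lambda_2}\bigr)^2\sqrt{\lambda_3}$ replacing the paper's parity-of-squareness phrasing --- but these are refinements of the same argument, not a different one.
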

\begin{proof}[Proof of the second half of Theorem 1.1]
In the proof of Lemma 3.3, we defined isomorphisms from the elliptic curve $E_i$ to its Legendre form $y^2 = x(x-1)(x-\lambda_i)$ by using $\alpha_+,\alpha_-,\beta_+$ and $\beta_-$. Hence, these isomorphisms are defined over $\mathbb{F}_{p^2\hspace{-0.3mm}}$ by Lemma 4.2. Therefore $C$ is maximal (resp.\ minimal) if and only if all $y^2 = x(x-1)(x-\lambda_i)$ are maximal (resp.\ minimal). Suppose that $C$ is superspecial, and the elliptic curves $E_1,E_2$ and $E_3$ are all supersingular.
\begin{itemize}
\item If $p \equiv 3 \pmod{4}$, then the elliptic curve $y^2 = x(x-1)(x-\lambda_i)$ is maximal by Proposition 4.1. Hence $C$ is maximal over $\mathbb{F}_{p^2}$.\vspace{-1mm}
\item If $p \equiv 1 \pmod{4}$, then the elliptic curve $y^2 = x(x-1)(x-\lambda_i)$ is minimal by Proposition 4.1. Hence $C$ is minimal over $\mathbb{F}_{p^2}$.
\end{itemize}
Therefore, the proof of Theorem 1.1 is completed.
\end{proof}

Finally, we give examples of superspecial hyperelliptic Howe curves.
\begin{Exp}
The automorphism group $G$ of a hyperelliptic Howe curve of genus 3 in characteristic $p > 7$ is either ${\rm C}_2 \hspace{-0.3mm}\times\hspace{-0.3mm} {\rm C}_2 \hspace{-0.3mm}\times\hspace{-0.3mm} {\rm C}_2,\,{\rm C}_2 \hspace{-0.3mm}\times\hspace{-0.3mm} {\rm D}_8, {\rm V}_8$ or ${\rm C}_2 \hspace{-0.3mm}\times\hspace{-0.3mm} {\rm S}_4$, see \cite[Section 3.1]{Lercier}:
\begin{enumerate}
\item[(i)] If $G \cong {\rm C}_2 \hspace{-0.3mm}\times\hspace{-0.3mm} {\rm C}_2 \hspace{-0.3mm}\times\hspace{-0.3mm} {\rm C}_2$, then $C$ is isomorphic to $y^2 = (x^4-ax^2+1)(x^4-bx^2+1)$ with $a+b \neq 0$;
\item[(ii)] If $G \cong {\rm C}_2 \hspace{-0.3mm}\times\hspace{-0.3mm} {\rm D}_8 $, then $C$ is isomorphic to $y^2 = x^8-ax^4+1$ with $a \neq 0, \pm 14$;
\item[(iii)] If $G \cong {\rm V}_8$, then $C$ is isomorphic to $y^2 = x^8 - 1$;
\item[(iv)] If $G \cong {\rm C}_2 \hspace{-0.3mm}\times\hspace{-0.3mm} {\rm S}_4$, then $C$ is isomorphic to $y^2 = x^8 \pm 14x^4 +1$.
\end{enumerate}
If $C$ is superspecial, then $C$ is maximal or minimal in every case. Indeed, apply Theorem 1.1, after we factorize the right hand side of the defining equation as the form of (i) over an algebraic closed field, where in the case (iii) we use $y^2=x^8+1$ instead of $y^2=x^8-1$. Remark that $y^2 = x^8-1$ is superspecial if and only if $p \equiv 7 \pmod{8}$, in case $y^2 = x^8 - 1$ and $y^2 = x^8 + 1$ are isomorphic.\par

Here, for $7 < p < 100$, the number of hyperelliptic superspecial Howe curves $C$ of genus 3 with automorphism group is as follows if the number is positive (cf. \cite[Theorem 3.15]{Brock}).
\begin{table}[H]
\centering
	\begin{tabular}{c||wc{9.6mm}|wc{9.6mm}|wc{9.6mm}|wc{9.6mm}|wc{9.6mm}|wc{9.6mm}|wc{9.6mm}|wc{9.6mm}|wc{9.6mm}|wc{9.6mm}}
	\hline
	${\rm Aut}(C)$ & $p=17$ & $p=23$ & $p=31$ & $p=41$ & $p=47$ & $p=71$ & $p=73$ & $p=79$ & $p=89$ & $p=97$\\\hline
	${\rm C}_2 \hspace{-0.3mm}\times\hspace{-0.3mm} {\rm C}_2 \hspace{-0.3mm}\times\hspace{-0.3mm} {\rm C}_2$ & 0 & 2 & 3 & 0 & 4 & 10 & 2 & 9 & 0 & 4\\
	${\rm C}_2 \hspace{-0.3mm}\times\hspace{-0.3mm} {\rm D}_8$ & 1 & 0 & 1 & 1 & 2 & 0 & 3 & 1 & 1 & 1\\
	${\rm V}_8$ & 0 & 1 & 1 & 0 & 1 & 1 & 0 & 1 & 0 & 0\\
	${\rm C}_2 \hspace{-0.3mm}\times\hspace{-0.3mm} {\rm S}_4$ & 0 & 0 & 0 & 0 & 1 & 0 & 0 & 0 & 0 & 0\\
	\hline
	\end{tabular}
\end{table}\vspace{-1mm}
We use Magma calculator to obtain the above table. We compute the Hasse-Witt matrix of the curve $C: y^2 = (x^4-ax^2+1)(x^4-bx^2+1)$, and find $a,b \in \mathbb{F}_{p^2}$ such that $C$ is superspecial. Then, we decide whether these curves are isomorphic by using Shioda invariants (cf.\ \cite{Shioda}).
\end{Exp}

\newpage
Lastly, let us discuss the maximality of twists of the curve $C$. Note that in general, a hyperelliptic curve $C: y^2 = f(x)$ is maximal (resp. minimal) over $\mathbb{F}_{p^{2e}}\!$ if and only if $C': \varepsilon y^2 = f(x)$ is minimal (resp. maximal) over $\mathbb{F}_{p^{2e}\!}$ for non-square $\varepsilon \in (\mathbb{F}_{p^{2e}}\hspace{-0.5mm})^{\hspace{-0.3mm}\times}$.
\begin{Cor}
Assume that a hyperelliptic curve $C: y^2 = (x^4-ax^2+1)(x^4-bx^2+1)$ is superspecial. Let $\varepsilon \in (\mathbb{F}_{p^{2e}}\hspace{-0.5mm})^{\hspace{-0.3mm}\times\!}$ and $C': \varepsilon y^2 = (x^4-ax^2+1)(x^4-bx^2+1)$.\vspace{-1mm}
\begin{enumerate}
\item When $e$ is odd, then the followings are true:\vspace{-1mm}
\begin{itemize}
\item If $p \equiv 3 \pmod{4}$, then $C'$ is maximal over $\mathbb{F}_{p^{2e}}\!$ if and only if $\varepsilon$ is a square in $\mathbb{F}_{p^{2e}}$.\vspace{-1mm}
\item If $p \equiv 1 \pmod{4}$, then $C'$ is maximal over $\mathbb{F}_{p^{2e}}\!$ if and only if $\varepsilon$ is not a square in $\mathbb{F}_{p^{2e}}$.\vspace{-1mm}
\end{itemize}
\item When $e$ is even, then $C'$ is maximal over $\mathbb{F}_{p^{2e}}\!$ if and only if $\varepsilon$ is not a square in $\mathbb{F}_{p^{2e}}$.
\end{enumerate}
\begin{proof}
Suppose that a curve $\varGamma$ over $\mathbb{F}_{p^2}$ is maximal (resp. minimal), then $\varGamma$ over $\mathbb{F}_{p^{2e}}\!$ is also maximal (resp. minimal) when $e$ is odd, and is minimal when $e$ is even. This follows from the Weil conjecture (cf. \cite[Appendix\hspace{1mm}C, Exercise 5.7]{Hartshorne}) and the fact that $\varGamma$ over $\mathbb{F}_{p^2}$ is maximal (resp. minimal) if and only if all the eigenvalues of the Frobenius on the first \'{e}tale cohomology group are $-p$ (resp. $p$).
\end{proof}
\end{Cor}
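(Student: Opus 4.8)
The plan is to reduce the statement to Theorem 1.1 by combining two elementary facts: the quadratic-twist relation recalled immediately before the corollary, and the way maximality and minimality propagate from $\mathbb{F}_{p^2}$ to the larger field $\mathbb{F}_{p^{2e}}$. No new geometry is needed; the whole point is careful bookkeeping of the four subcases.

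First I would separate the two possibilities for $\varepsilon$. If $\varepsilon$ is a square in $\mathbb{F}_{p^{2e}}$, say $\varepsilon = \delta^2$ with $\delta \in \mathbb{F}_{p^{2e}}$, then the substitution $y \mapsto \delta y$ gives an isomorphism $C' \cong C$ over $\mathbb{F}_{p^{2e}}$, so $C'$ is maximal (resp. minimal) over $\mathbb{F}_{p^{2e}}$ exactly when $C$ is. If $\varepsilon$ is not a square, then $C'$ is the nontrivial quadratic twist of $C$ over $\mathbb{F}_{p^{2e}}$, and the relation recalled above shows that $C'$ is maximal over $\mathbb{F}_{p^{2e}}$ if and only if $C$ is minimal over $\mathbb{F}_{p^{2e}}$.

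Next I would determine the behaviour of $C$ itself over $\mathbb{F}_{p^{2e}}$. By Theorem 1.1 the curve $C$ is maximal over $\mathbb{F}_{p^2}$ when $p \equiv 3 \pmod{4}$ and minimal over $\mathbb{F}_{p^2}$ when $p \equiv 1 \pmod{4}$; equivalently, all eigenvalues of the Frobenius on the first \'{e}tale cohomology group of $C$ over $\mathbb{F}_{p^2}$ equal $-p$ in the first case and $p$ in the second. Passing to $\mathbb{F}_{p^{2e}}$ replaces each eigenvalue $\alpha$ by $\alpha^e$, so $-p$ becomes $(-p)^e$, which is $-p^e$ for $e$ odd and $p^e$ for $e$ even, while $p$ becomes $p^e$ for every $e$. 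Hence $C$ remains maximal over $\mathbb{F}_{p^{2e}}$ precisely when $p \equiv 3 \pmod{4}$ and $e$ is odd, and is minimal over $\mathbb{F}_{p^{2e}}$ in all other cases.

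Finally I would feed this dichotomy into the twist comparison of the first step. For $e$ odd and $p \equiv 3 \pmod{4}$, $C$ is maximal over $\mathbb{F}_{p^{2e}}$, so $C'$ is maximal exactly when $\varepsilon$ is a square; for $e$ odd and $p \equiv 1 \pmod{4}$, $C$ is minimal, so $C'$ is maximal exactly when $\varepsilon$ is not a square. For $e$ even, $C$ is minimal over $\mathbb{F}_{p^{2e}}$ regardless of $p \bmod 4$, so in both subcases $C'$ is maximal if and only if $\varepsilon$ is not a square. These are precisely the three assertions of the corollary. I do not expect a genuine obstacle; the only delicate point is the sign bookkeeping in $(-p)^e$, since it is exactly the parity of $e$ that produces the asymmetry between the two parts of the statement.
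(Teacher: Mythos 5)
Your proof is correct and takes essentially the same route as the paper: both arguments rest on the quadratic-twist relation recalled just before the corollary, on Theorem 1.1, and on the fact that Frobenius eigenvalues $-p$ (resp.\ $p$) over $\mathbb{F}_{p^2}$ become $(-p)^e$ (resp.\ $p^e$) over $\mathbb{F}_{p^{2e}}$, so maximality persists only for $e$ odd while minimality persists for all $e$. Your write-up simply makes explicit the case bookkeeping that the paper's terse proof leaves implicit.
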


\end{document}